\newcommand{\Es}{E}
\newcommand{\e}{{\rm e}}
\newcommand{\dd}{{\rm d}}
\newtheorem{lemma}{Lemma}
\newtheorem{Teo}{Theorem}
\newtheorem{col}{Corollary}
\newtheorem{ex}{Example}
\title{Gradient statistic: higher-order asymptotics and Bartlett-type correction}
\author{Tiago M.~Vargas,\quad Silvia L.P.~Ferrari,\quad Artur J.~Lemonte\\
{\small {\em Departamento de Estat\'istica, Universidade de S\~ao Paulo, S\~ao Paulo/SP, Brazil}}}
\date{}
\begin{document}
\maketitle

\begin{abstract}

We obtain an asymptotic expansion for the null distribution function of the
gradient statistic for testing composite null hypotheses in the presence of
nuisance parameters. The expansion is derived using a Bayesian
route based on the shrinkage argument described in \cite{GhoshMukerjee1991}. 
Using this expansion, we propose a Bartlett-type corrected gradient statistic with
chi-square distribution up to an error of order $o(n^{-1})$ under the null hypothesis.
Further, we also use the expansion to modify the percentage points of the large sample reference
chi-square distribution. A small Monte Carlo experiment and
various examples are presented and discussed. \\

\noindent {\it Key-words:}  Asymptotic expansion; Bartlett-type correction; Bayesian route;
Gradient statistic; Shrinkage argument.
\end{abstract}

\section{Introduction}

The most common hypothesis tests for large samples are the likelihood ratio \citep{Wilks1938},
the Wald \citep{Wald1943}, and the Rao score \citep{Rao1948} tests.
These tests are widely used in areas such as
economics, biology, and engineering, among others, since exact tests are not always
available. An alternative test uses the gradient statistic recently
proposed by \cite{Terrell2002}. An advantage of the gradient statistic
over the Wald and the score statistics is that it does not involve
knowledge of the information matrix, neither expected nor observed.
Additionally, the gradient statistic is quite simple to be computed. This has been
emphasised by C.R.~Rao \citep{Rao2005}, who wrote:
`The suggestion by Terrell is attractive as it
is simple to compute. It would be of interest to investigate
the performance of the [gradient] statistic'.

Let $x_{1},\ldots,x_{n}$ be a random sample of size $n$
with joint probability density function $f(\cdot;\bm{\theta})$, which depends
on a $p$-dimensional vector of unknown parameters $\bm{\theta}=(\theta_{1},\ldots,\theta_{p})^{\top}$.
Let $\ell(\bm{\theta})=n^{-1}\sum_{i=1}^{n} \log f(x_{i};\bm{\theta})$
and $\bm{U}(\bm{\theta}) = \partial\ell(\bm{\theta})/\partial\bm{\theta}$
be the log-likelihood function and the score vector, respectively;
notice that, for convenience, both are divided by $n$.
We wish to test the null hypothesis $\mathcal{H}_{0}: \bm{\theta}_{1}=\bm{\theta}_{10}$
against the two-sided alternative hypothesis $\mathcal{H}_{a}:\bm{\theta}_{1}\neq \bm{\theta}_{10}$,
where $\bm{\theta}_{10}$ is a fixed $q$-dimensional vector,
$\bm{\theta}_{1}=(\theta_{1},\ldots,\theta_{q})^{\top}$ and
$\bm{\theta}_{2}=(\theta_{q+1},\ldots,\theta_{p})^{\top}$.
The partition in $\bm{\theta}$ induces the corresponding partition in $\bm{U}(\bm{\theta})$:
$\bm{U}(\bm{\theta}) = (\bm{U}_{1}(\bm{\theta})^{\top},
\bm{U}_{2}(\bm{\theta})^{\top})^{\top}$. Let
$\widehat{\bm{\theta}}=(\widehat{\bm{\theta}}_1,\widehat{\bm{\theta}}_2)^{\top}$ and
$\widetilde{\bm{\theta}}=(\bm{\theta}_{10},\widetilde{\bm{\theta}}_2)^{\top}$ be the unrestricted and the
restricted (under $\mathcal{H}_{0}$) maximum likelihood estimators of
$\bm{\theta}=(\bm{\theta}_{1}^\top,\bm{\theta}_{2}^\top)^\top$, respectively.
The gradient statistic for testing $\mathcal{H}_{0}$ is defined as
\begin{equation}\label{est_grad}
S=n\bm{U}(\widetilde{\bm{\theta}})^{\top}(\widehat{\bm{\theta}}-\widetilde{\bm{\theta}}),
\end{equation}
and can also be written as
$S = n\bm{U}_{1}(\widetilde{\bm{\theta}})^{\top}(\widehat{\bm{\theta}}_{1} - \bm{\theta}_{10})$,
since $\bm{U}_{2}(\widetilde{\bm{\theta}}) = \bm{0}$.
Like the likelihood ratio, the Wald, and the score statistics, the gradient statistic has
an asymptotic $\chi^{2}_{q}$ distribution under the null hypothesis, 
$q$ being the number of restrictions imposed by $\mathcal{H}_{0}$.

Equation \eqref{est_grad} is the inner
product of the score vector evaluated at $\mathcal{H}_{0}$
and the difference between the unrestricted and the restricted maximum likelihood estimators
of $\bm{\theta}$. Although the gradient statistic was derived by \cite{Terrell2002} from the score
and the Wald statistics, it is of a different nature. The score statistic measures the squared
length of the score vector evaluated at $\mathcal{H}_{0}$ using the metric given by the inverse of
the Fisher information matrix, whereas the Wald statistic gives the squared distance between the
unrestricted and the restricted maximum likelihood estimators
of  $\bm{\theta}$ using the metric given by the Fisher
information matrix. Moreover, both are quadratic forms. The gradient statistic, on the other hand,
is not a quadratic form and measures the distance between the unrestricted and the restricted
maximum likelihood estimators of  $\bm{\theta}$ from a different perspective.
It measures the orthogonal projection of the score vector at $\mathcal{H}_{0}$
on the vector $\widehat{\bm{\theta}}-\widetilde{\bm{\theta}}$.

Recently, the gradient test has been the subject of some research papers.
In particular, \cite{LemonteFerrari2012a} obtained the local power of the gradient
test under Pitman alternatives (a sequence of alternative hypotheses converging to the null
hypothesis at the rate of $n^{-1/2}$). The authors compared
the local power of the gradient test with those
of the likelihood ratio, the Wald, and the score tests. They showed that none of
the tests is uniformly  more powerful than the others, and
therefore, the gradient test is not only
very simple to be calculated but  it is also competitive with
the others in terms of local power.
Other recent works in which the gradient test
is investigated are \cite{Lemonte2011,Lemonte2012a} and
\cite{LemonteFerrari2011,LemonteFerrari2012b,LemonteFerrari2012c}.

The main result in \cite{LemonteFerrari2012a} regarding the local power of the gradient test up
to an error of order $o(n^{-1/2})$ represents the first step in the study of higher
order asymptotic properties of the gradient test. In the present paper, we wish to go further by
focusing on deriving the second-order approximation to the null distribution of the gradient statistic.
In other words, our aim is to obtain an asymptotic expansion for the cumulative distribution function of
the gradient statistic under the null hypothesis up to an error of order $o(n^{-1})$.

The usual route for deriving expansions for the distribution of asymptotic chi-square test statistics
involves multivariate Edgeworth series expansions. Although such a route has been followed by many authors,
it is extremely lengthy and tedious \citep[see, for example,][]{Hayakawa1977,Harris1985}.
Here, on the other hand, in order to derive an asymptotic expansion for the null distribution
of the gradient statistic up to order $n^{-1}$, we
follow a Bayesian route based on a shrinkage argument
originally suggested by  \cite{GhoshMukerjee1991} and described later
in \cite{MukerjeeReid2000}. Although it uses a Bayesian approach,
this technique can be used to solve frequentist problems,
such as the derivation of Bartlett corrections and tail probabilities \citep{DattaMukerjee2003}.

Additionally, we obtain a Bartlett-type
correction factor for the gradient statistic from the results in
\cite{CordeiroFerrari1991}. Under the null hypothesis, the corrected statistic is
distributed  as chi-square up to an error of order $o(n^{-1})$, while the uncorrected gradient
statistic has a chi-square distribution
up to an error of order $o(n^{-1/2})$; that is, the Bartlett-type correction factor
makes the approximation error be reduced from $o(n^{-1/2})$ to $o(n^{-1})$.
For a detailed survey on Bartlett and Bartlett-type corrections,
the reader is referred to \cite{CordeiroCribari1996}.

The paper unfolds as follows. In Section \ref{main_results},
we present our main results, namely an asymptotic expansion
for the cumulative distribution function of the
gradient statistic and its Bartlett-type correction.
In Sections \ref{onefamily} and \ref{twofamily}, we
particularise our general results to one-parameter families and to families
with two orthogonal parameters, respectively.
A small Monte Carlo study is also presented in Section \ref{twofamily}.
Section \ref{conclusion} closes the paper with a brief discussion.
Technical details are collected in two appendices.

\section{The main result}\label{main_results}

First, let us introduce some notation. Let $D_{j}=\partial/\partial\theta_{j}$
($j=1,\ldots,p$) be the differential operator.
We define $U_j=D_j\ell(\bm{\theta})$,
$U_{jr}=D_jD_r\ell(\bm{\theta})$, $U_{jrs}=D_jD_rD_s\ell(\bm{\theta})$,
and so on. We make the same assumptions, such as the regularity of the first four
derivatives of $\ell(\bm{\theta})$ with respect to $\bm{\theta}$
and the existence and uniqueness of the maximum likelihood estimator of $\bm{\theta}$,
as those fully outlined by \cite{Hayakawa1977}.
Let $\kappa_{j,r}=\Es(U_{j}U_{r})$, $\kappa_{jr}=\Es(U_{jr})$, $\kappa_{jrs}=\Es(U_{jrs})$,
$\kappa_{jrsu}=\Es(U_{jrsu})$, $\kappa_{j,rs}=\Es(U_{j}U_{rs})$,
$\kappa_{jrs,u}=\Es(U_{jrs}U_{u})$, $\kappa_{ju,rs}=\Es(U_{ju}U_{rs})-\kappa_{ju}\kappa_{rs}$,
$\kappa_{j,u,rs}=\Es(U_j U_u U_{rs})+\kappa_{ju}\kappa_{rs}$, etc., denote
the cumulants of log-likelihood derivatives.
The cumulants are not functionally independent, for instance,
$\kappa_{j,r} = -\kappa_{jr}$, $\kappa_{jr,s} + \kappa_{jrs} = \kappa_{jr}^{(s)}$,
$\kappa_{j,rsu} + \kappa_{jrsu} =\kappa_{rsu}^{(j)}$,
$\kappa_{j,r,su} = \kappa_{jrsu} - \kappa_{jsu}^{(r)} + \kappa_{su}^{(jr)} - \kappa_{jr,su}$,
where $\kappa_{jr}^{(s)}=D_s\kappa_{jr}$ and $\kappa_{su}^{(jr)}=D_jD_r\kappa_{su}$, etc.
Relations among them were first obtained by
\cite{Bartlett1953a, Bartlett1953b}.
Further, let $\bm{K}$ be the Fisher information matrix
\[
\bm{K}=((\kappa_{j,r}))=-((\kappa_{jr})) =
\begin{bmatrix}
\bm{K}_{11} & \bm{K}_{12} \\
\bm{K}_{21} & \bm{K}_{22}
\end{bmatrix},
\]
with $\bm{K}^{-1}=((\kappa^{j,r}))$ denoting its inverse.  Finally, define the
matrices
\[
\bm{A} = ((a^{jr})) =
\begin{bmatrix}
\bm{0} & \bm{0} \\
\bm{0} & \bm{K}_{22}^{-1}
\end{bmatrix},
\qquad
\bm{M} = ((m^{jr})) = \bm{K}^{-1} - \bm{A}.
\]
In what follows, we use the Einstein summation convention, where
$\sum^{\prime}$ denotes summation over all components
of $\bm{\theta}$; that is, the indices $j$, $r$, $s$, $k$, $l$ and $u$ range
over $1$ to $p$. We now establish the following theorem.
\begin{Teo}\label{theorem1}
The asymptotic expansion for the null distribution of the gradient statistic for
testing $\mathcal{H}_{0}: \bm{\theta}_{1}=\bm{\theta}_{10}$ against
$\mathcal{H}_{a}:\bm{\theta}_{1}\neq \bm{\theta}_{10}$ is
\begin{equation}\label{cdf_grad}
\Pr(S\leq x)=G_q(x)+ \frac{1}{24n}\sum_{i=0}^{3}R_{i}G_{q+2i}(x)+o(n^{-1}),
\end{equation}
where $G_z(x)$ is the cumulative distribution function of a chi-square random variable with $z$ degrees of freedom,
$R_{1}=3A_3-2A_2+A_1$, $R_{2}=A_2 -3A_3$, $R_{3}=A_3$, $R_0=-(R_1+R_2+R_3)$,
\begin{align*}
A_{1}&= 3\sum\nolimits^{\prime}\kappa_{jrs}\kappa_{klu}a^{lu}\bigl\{3m^{jk}a^{rs}+m^{jr}\bigl(\kappa^{s,k}+2a^{sk}\bigr)\bigr\}\\
&\quad+ 6\sum\nolimits^{\prime}\kappa_{jrs,u}m^{jr}a^{su}
-6\sum\nolimits^{\prime}\bigl(\kappa_{jrsu}+\kappa_{jrs,u}\bigr)\bigl(m^{jr}\kappa^{s,u}+2m^{ju}a^{rs}\bigr)\\
&\quad+6\sum\nolimits^{\prime}\bigl(\kappa_{klu}+\kappa_{kl,u}\bigr)
\Bigl[2\bigl(\kappa_{jrs}+\kappa_{jr,s}\bigr)\bigl(\kappa^{s,j}\kappa^{r,k}\kappa^{l,u}-a^{sj}a^{rk}a^{lu}\\
&\qquad +\kappa^{s,k}\kappa^{l,j}\kappa^{r,u}-a^{sk}a^{lj}a^{ru}\bigr)
-\kappa_{jrs}\bigl\{\bigl(\kappa^{s,u}+a^{su}\bigr)\bigl(\kappa^{j,k}\kappa^{l,r}-a^{jk}a^{lr}\bigr)\\
&\qquad+m^{jr}\bigl(a^{sk}a^{lu}+\kappa^{s,k}\kappa^{l,u}\bigr)
+2a^{rs}\bigl(\kappa^{j,k}\kappa^{l,u}-a^{jk}a^{lu}\bigr)+2a^{rk}a^{ls}m^{ju}\bigr\}\Bigr]\\
&\quad +12\sum\nolimits^{\prime}\bigl(\kappa_{jrsu}+\kappa_{j,rsu}+\kappa_{jsu,r}
+\kappa_{ju,rs}+\kappa_{j,u,rs}\bigl)\bigr(\kappa^{j,s}\kappa^{u,r}-a^{js}a^{ur}\bigr),
\end{align*}
\begin{align*}
A_{2}&=-3\sum\nolimits^{\prime}\kappa_{jrs}\biggl[\kappa_{klu}\biggl\{m^{jr}\biggl(m^{sk}a^{lu}
+\frac{3}{4}m^{sk}m^{lu}+3m^{kl}a^{su}\biggr)+\frac{1}{2}m^{jk}m^{rl}m^{su}\biggr\}\\
&\qquad-2\bigl(\kappa_{klu}+\kappa_{kl,u}\bigr)\bigl\{m^{su}\bigl(\kappa^{j,k}\kappa^{l,r}
- a^{jk}a^{lr}\bigr)+m^{jr}\bigl(\kappa^{s,k}\kappa^{l,u}-a^{sk}a^{lu}\bigr)\bigr\}\biggr]\\
&\quad+3\sum\nolimits^{\prime}\bigl(\kappa_{jrsu}+2\kappa_{jrs,u}\bigr)m^{jr}m^{su},
\end{align*}
\[
A_{3}=\frac{1}{12}\sum\nolimits^{\prime}\kappa_{jrs}\kappa_{klu}\bigl(9m^{jr}m^{sk}m^{lu}+6m^{jk}m^{rl}m^{su}\bigr).
\]
\end{Teo}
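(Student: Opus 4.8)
The plan is to bypass the multivariate Edgeworth machinery by combining a stochastic expansion of $S$ with the Bayesian shrinkage argument of \cite{GhoshMukerjee1991}. Throughout, the true parameter is taken to satisfy $\mathcal{H}_0$ and every log-likelihood derivative is evaluated there. First I would expand the two factors of $S=n\bm{U}_1(\widetilde{\bm{\theta}})^{\top}(\widehat{\bm{\theta}}_1-\bm{\theta}_{10})$ about the true value. Writing $\bm{U}=\bm{U}(\bm{\theta})$ for the score at the truth, the estimators satisfy $\widehat{\bm{\theta}}-\bm{\theta}=\bm{K}^{-1}\bm{U}+O_p(n^{-1})$ and $\widetilde{\bm{\theta}}-\bm{\theta}=\bm{A}\bm{U}+O_p(n^{-1})$, so that $\widehat{\bm{\theta}}-\widetilde{\bm{\theta}}=\bm{M}\bm{U}+O_p(n^{-1})$ and $\bm{U}(\widetilde{\bm{\theta}})=(\bm{I}-\bm{K}\bm{A})\bm{U}+O_p(n^{-1})$. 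The block identities $\bm{A}\bm{K}\bm{A}=\bm{A}$ and $\bm{M}\bm{K}\bm{A}=\bm{0}$ then give the leading term $S^{(0)}=n\bm{U}^{\top}\bm{M}\bm{U}$, a rank-$q$ quadratic form that is asymptotically $\chi^2_q$; this is exactly why the matrices $\bm{A}$ (the nuisance contribution from $\widetilde{\bm{\theta}}_2$) and $\bm{M}=\bm{K}^{-1}-\bm{A}$, rather than $\bm{K}^{-1}$ alone, organise the coefficients $A_1$, $A_2$, $A_3$. Carrying the expansion two further orders yields $S=S^{(0)}+n^{-1/2}S^{(1)}+n^{-1}S^{(2)}+o_p(n^{-1})$, in which $S^{(1)}$ and $S^{(2)}$ are polynomials in the standardized score and the centred second- and third-order derivatives, with coefficients built from $\bm{K}^{-1}$, $\bm{A}$ and $\bm{M}$.

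Next I would use the shrinkage argument to compute the required frequentist cumulants of $S$ up to $o(n^{-1})$ without explicitly forming the joint Edgeworth series. Assigning a smooth positive prior $\pi$ to $\bm{\theta}$, the posterior concentrates about $\widehat{\bm{\theta}}$ and admits a normal approximation with $n^{-1/2}$ and $n^{-1}$ corrections whose coefficients are again the cumulants $\kappa$. For any smooth $\phi(\bm{X},\bm{\theta})$ assembled from the standardized, centred quantities above, the identity
\[
\int E_{\bm{\theta}}\{\phi\}\,\pi(\bm{\theta})\,\dd\bm{\theta}=E^{\pi}\{\phi\}=E^{\mathrm{marg}}_{\bm{X}}\bigl[E\{\phi\mid\bm{X}\}\bigr]
\]
equates the prior average of the frequentist expectation with the joint Bayesian expectation, the latter computed from the posterior expansion and then integrated over the data. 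Letting $\pi$ shrink to a point mass at the true value isolates $E_{\bm{\theta}}\{\phi\}$. Applying this with $\phi$ ranging over the terms of the expansion of $S$ and its low powers delivers the first three cumulants of $S$ as explicit sums of products of $\kappa_{jrs}$, $\kappa_{jrsu}$, $\kappa_{jrs,u}$ and the like, contracted against $\bm{K}^{-1}$, $\bm{A}$ and $\bm{M}$.

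With these cumulants in hand, the null distribution function follows from the standard chi-square-series inversion used by \cite{CordeiroFerrari1991}: a statistic whose leading term is $\chi^2_q$ and whose cumulants depart from those of $\chi^2_q$ by $O(n^{-1})$ quantities has a distribution function of the form $G_q(x)+\tfrac{1}{24n}\sum_{i=0}^{3}R_iG_{q+2i}(x)+o(n^{-1})$. The coefficients $R_i$ are fixed linearly by the $O(n^{-1})$ corrections to the cumulants, and the normalisation $\sum_{i=0}^{3}R_i=0$---which makes the correction vanish as $x\to\infty$ and preserves total mass---forces $R_0=-(R_1+R_2+R_3)$. Matching the computed cumulants to this template produces $R_1=3A_3-2A_2+A_1$, $R_2=A_2-3A_3$ and $R_3=A_3$.

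The hard part will be the final algebraic reduction. The raw cumulant expressions contain a large number of products, many carrying mixed nuisance/tested indices, and they must be collapsed onto the compact forms $A_1$, $A_2$, $A_3$. This demands systematic use of the Bartlett identities among $\kappa_{j,r}$, $\kappa_{jr}$, $\kappa_{jrs}$, $\kappa_{jrsu}$ and their derivatives (for instance $\kappa_{j,r}=-\kappa_{jr}$ and $\kappa_{jr,s}+\kappa_{jrs}=\kappa_{jr}^{(s)}$), together with the orthogonality-type relations $\bm{A}\bm{K}\bm{A}=\bm{A}$ and $\bm{M}\bm{K}\bm{A}=\bm{0}$, to trigger the cancellations that remove every term absent from the stated coefficients. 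I would organise the bookkeeping by cumulant type, first isolating the pure third-order contributions (which form $A_3$ and feed $A_1$ and $A_2$) and then the fourth-order and mixed contributions, and I would check the outcome against the one-parameter and two-orthogonal-parameter specialisations of Sections \ref{onefamily} and \ref{twofamily} as a consistency test.
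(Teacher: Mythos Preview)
Your strategy—stochastically expand $S$, apply the shrinkage argument, then read off the chi-square series—is the paper's in outline, but two organisational choices differ in ways that materially affect the execution.

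First, the paper does \emph{not} expand $S$ about the true parameter in terms of the score $\bm{U}$. Its Lemma~1 expresses $S$ as a polynomial in the \emph{posterior} variable $\bm{\epsilon}=n^{1/2}(\bm{\theta}_1-\widehat{\bm{\theta}}_1)$, with data-dependent coefficients $\Psi^{(1)},\Psi^{(3)},\Psi^{(4)}$ built from quantities at $\widehat{\bm{\theta}}$ (the observed information $\bm{\Lambda}$, the arrays $\psi_{jrs}$, and the auxiliary $\sigma^{jr}$, $\tau^{jj'}$). This is exactly the variable whose marginal posterior admits the Chang--Mukerjee expansion \eqref{pipost}, so the posterior characteristic function $E_\pi\{\exp(\xi S)\mid\bm X\}$ is obtained by a single Gaussian-type integration over $\bm{\epsilon}$. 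Your score-based expansion $S^{(0)}=n\bm U^\top\bm M\bm U+\cdots$ is the natural frequentist object, but in the Bayesian step $\bm{\theta}$ is the integration variable: a polynomial in $\bm U(\bm{\theta})$ cannot be integrated directly against \eqref{pipost}, and re-expressing it in terms of $\bm{\epsilon}$ amounts to deriving Lemma~1 after all. Second, the paper works with the full characteristic function rather than individual cumulants. After the algebraic device $\xi=-\tfrac12(1-2\xi)+\tfrac12$ it obtains the form $(1-2\xi)^{-q/2}\{1+n^{-1}\sum_i H_i(1-2\xi)^{-i}\}$; Steps~2 and~3 of the shrinkage then act term-by-term on the $H_i$, and the prior-dependent pieces such as $\bar\pi_u/\bar\pi$ disappear only through integration by parts in Step~3—this is where the differential-operator contributions $D_jD_r m^{jr}$ and $D_u(\kappa_{jrs}m^{jr}m^{su})$ in the intermediate $A_i$ originate before the Bartlett identities are applied. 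Your plan to compute three cumulants separately and match them to the Cordeiro--Ferrari template is equivalent in principle, but the characteristic-function route packages all three at once and makes $\sum_i R_i=0$ automatic from $\sum_i\bar A_i=0$.
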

\begin{proof}
The proof is presented in Appendix 1.
\end{proof}

Basically, in order to prove Theorem \ref{theorem1}, we follow a Bayesian
route based on a shrinkage argument. This argument
is described in Appendix 2.

If the null hypothesis is simple, we have $q=p$, $\bm{A} = \bm{0}$
and $\bm{M}=\bm{K}^{-1}$. Therefore, an immediate consequence
of Theorem \ref{theorem1} is the following corollary.
\begin{col}\label{corollary1}
The asymptotic expansion for the null distribution of the gradient statistic for
testing $\mathcal{H}_{0}: \bm{\theta}=\bm{\theta}_{0}$ against
$\mathcal{H}_{a}:\bm{\theta}\neq \bm{\theta}_{0}$ is  given by \eqref{cdf_grad}
with $q=p$, $R_{1}=3A_3-2A_2+A_1$, $R_{2}=A_2 -3A_3$, $R_{3}=A_3$, $R_0=-(R_1+R_2+R_3)$
and the $A$'s are $A_{3}=\sum\nolimits^{\prime}\kappa_{jrs}\kappa_{klu}\bigl(9\kappa^{j,r}\kappa^{s,k}\kappa^{l,u}
+6\kappa^{j,k}\kappa^{r,l}\kappa^{s,u}\bigr)/12$,
\begin{align*}
A_{1}&= -6\sum\nolimits^{\prime}\bigl(\kappa_{jrsu}+\kappa_{jrs,u}\bigr)\kappa^{j,r}\kappa^{s,u}\\
&\quad+6\sum\nolimits^{\prime}\bigl(\kappa_{klu}+\kappa_{kl,u}\bigr)\bigl\{2\bigl(\kappa_{jrs}+\kappa_{jr,s}\bigr)
\bigl(\kappa^{s,j}\kappa^{r,k}\kappa^{l,u}+\kappa^{s,k}\kappa^{l,j}\kappa^{r,u}\bigr)\\
&\qquad-\kappa_{jrs}\bigl(\kappa^{s,u}\kappa^{j,k}\kappa^{l,r}
+\kappa^{j,r}\kappa^{s,k}\kappa^{l,u}\bigr)\bigr\}\\
&\quad+12\sum\nolimits^{\prime}\bigl(\kappa_{jrsu}+\kappa_{j,rsu}+\kappa_{jsu,r}
+\kappa_{ju,rs}+\kappa_{j,u,rs}\bigr)\kappa^{j,s}\kappa^{u,r},
\end{align*}
\begin{align*}
A_{2}&=-3\sum\nolimits^{\prime}\kappa_{jrs}\biggl\{\kappa_{klu}\biggl(\frac{3}{4}\kappa^{j,r}
\kappa^{s,k}\kappa^{l,u}+\frac{1}{2}\kappa^{j,k}\kappa^{r,l}\kappa^{s,u}\biggr) \\
&\qquad-2\bigl(\kappa_{klu}+\kappa_{kl,u}\bigr)\bigl(\kappa^{s,u}\kappa^{j,k}\kappa^{l,r}
+\kappa^{j,r}\kappa^{s,k}\kappa^{l,u}\bigr)\biggr\}\\
&\quad+3\sum\nolimits^{\prime}\bigl(\kappa_{jrsu}+2\kappa_{jrs,u}\bigr)\kappa^{j,r}\kappa^{s,u}.
\end{align*}
\end{col}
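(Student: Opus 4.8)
The plan is to derive Corollary \ref{corollary1} as a direct specialisation of Theorem \ref{theorem1}, not as an independent calculation. The key observation is that a simple null hypothesis $\mathcal{H}_0:\bm{\theta}=\bm{\theta}_0$ imposes $q=p$ restrictions, so the nuisance block $\bm{\theta}_2$ is absent. Consequently the submatrix $\bm{K}_{22}$ is vacuous, the matrix $\bm{A}$ collapses to the zero matrix, and $\bm{M}=\bm{K}^{-1}-\bm{A}=\bm{K}^{-1}$; in index notation this is $a^{jr}=0$ and $m^{jr}=\kappa^{j,r}$ for all $j,r=1,\ldots,p$. Since the expansion \eqref{cdf_grad} and the coefficients $R_0,\ldots,R_3$ depend on the model only through $A_1$, $A_2$ and $A_3$, it suffices to evaluate these three quantities under the substitution $a^{jr}=0$, $m^{jr}=\kappa^{j,r}$.

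First I would treat $A_3$, which is the cleanest case: each summand is a product of $\kappa_{jrs}\kappa_{klu}$ with three factors $m^{\cdot\cdot}$ and no factor $a^{\cdot\cdot}$, so replacing every $m^{jr}$ by $\kappa^{j,r}$ immediately returns the stated form $A_3=\tfrac{1}{12}\sum\nolimits^{\prime}\kappa_{jrs}\kappa_{klu}(9\kappa^{j,r}\kappa^{s,k}\kappa^{l,u}+6\kappa^{j,k}\kappa^{r,l}\kappa^{s,u})$. Next I would reduce $A_1$ and $A_2$ by grouping their summands according to whether they carry a factor $a^{\cdot\cdot}$: every monomial containing at least one $a^{jr}$ is annihilated, and each surviving $m^{jr}$ becomes $\kappa^{j,r}$. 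In $A_1$ this removes the entire first line, the term $6\sum\nolimits^{\prime}\kappa_{jrs,u}m^{jr}a^{su}$, the piece $2m^{ju}a^{rs}$, the two $a^{\cdot\cdot}a^{\cdot\cdot}a^{\cdot\cdot}$ triples and all mixed $a$-containing terms inside the large bracket, and the $a^{js}a^{ur}$ factor in the final quartic-cumulant sum; what remains collapses exactly to the three displayed lines. The analogous bookkeeping in $A_2$ eliminates the $m^{sk}a^{lu}$ and $3m^{kl}a^{su}$ terms together with every $a^{\cdot\cdot}a^{\cdot\cdot}$ pairing, leaving the stated three-line expression.

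The reduction requires no limiting argument, no inequality and no probabilistic input beyond Theorem \ref{theorem1}; it is a purely algebraic specialisation. I therefore expect the only difficulty to be clerical: correctly identifying which of the many index-contracted monomials survive when $a^{\cdot\cdot}$ is set to zero, and confirming that the residual $m^{\cdot\cdot}$ factors reindex to the claimed symmetric $\kappa^{\cdot,\cdot}$ patterns. The main, and essentially only, obstacle is thus bookkeeping accuracy in this term-by-term collapse rather than any conceptual step.
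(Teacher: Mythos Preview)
Your approach is correct and is exactly the one the paper takes: immediately before the corollary the authors note that for a simple null hypothesis $q=p$, $\bm{A}=\bm{0}$ and $\bm{M}=\bm{K}^{-1}$, and then present Corollary~\ref{corollary1} as the direct specialisation of Theorem~\ref{theorem1} under these substitutions. Your term-by-term bookkeeping plan (setting $a^{jr}=0$, replacing $m^{jr}$ by $\kappa^{j,r}$, and discarding every monomial containing an $a$-factor) is precisely the intended algebraic reduction.
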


We are now able to present a Bartlett-type corrected gradient statistic.
A Bartlett-type correction is a multiplying factor, which
depends on the statistic itself, that results in
a modified statistic that follows a chi-square distribution with approximation error
of order less than $n^{-1}$. \cite{CordeiroFerrari1991}
obtained a general formula for a Bartlett-type correction
for a wide class of statistics that have a chi-square
distribution asymptotically. A special case is when the
cumulative distribution function of the statistic can be written
as \eqref{cdf_grad}, independently of the coefficients $R_1$, $R_2$, and $R_3$.
Hence, from Theorem \ref{theorem1} and the results in \cite{CordeiroFerrari1991},
we have the following corollary.
\begin{col}\label{col_corr_grad}
The modified statistic
\begin{equation}\label{corr_grad}
S^{*}=S\bigl\{1-\bigl(c+bS+aS^{2}\bigr)\bigr\},
\end{equation}
where
\[
a=\frac{A_{3}}{12nq(q+2)(q+4)},\qquad
b=\frac{A_2 -2A_3}{12nq(q+2)}, \qquad
c=\frac{A_1-A_2+A_3}{12nq},
\]
has a $\chi^{2}_{q}$ distribution up to an error of order $o(n^{-1})$ under
the null hypothesis.
\end{col}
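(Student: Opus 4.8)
The plan is to use the fact that the null distribution \eqref{cdf_grad} obtained in Theorem \ref{theorem1} has precisely the form for which \cite{CordeiroFerrari1991} constructed a general Bartlett-type correction: a leading $\chi^{2}_{q}$ distribution function plus an $O(n^{-1})$ linear combination $\sum_{i=0}^{3}R_{i}G_{q+2i}$ whose coefficients sum to zero, since $R_{0}=-(R_{1}+R_{2}+R_{3})$. Under this structure one can choose a multiplicative polynomial adjustment of the form \eqref{corr_grad} that cancels the whole $O(n^{-1})$ term, and the substance of the corollary is only the identification of $a$, $b$ and $c$. I would establish this directly by a change-of-variable argument rather than merely quoting the general formula, so that the correspondence between the $R_{i}$ and the stated coefficients is transparent.

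First I would write $S^{*}=g(S)$ with $g(s)=s\{1-(c+bs+as^{2})\}$, observe that $a$, $b$ and $c$ are $O(n^{-1})$, and invert the series to obtain $g^{-1}(x)=x+cx+bx^{2}+ax^{3}+O(n^{-2})$. Since $g$ is increasing on the relevant range, $\Pr(S^{*}\leq x)=\Pr\{S\leq g^{-1}(x)\}$. Feeding $g^{-1}(x)$ into \eqref{cdf_grad} and Taylor-expanding, the only term affected at order $n^{-1}$ is the leading $G_{q}$ (the $O(n^{-1})$ shift multiplied into the already-$O(n^{-1})$ sum is $O(n^{-2})$), giving
\[
\Pr(S^{*}\leq x)=G_{q}(x)+g_{q}(x)\,(cx+bx^{2}+ax^{3})+\frac{1}{24n}\sum_{i=0}^{3}R_{i}G_{q+2i}(x)+o(n^{-1}),
\]
where $g_{q}=G_{q}'$ is the $\chi^{2}_{q}$ density.

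The key step is to rewrite the chi-square sum in terms of $g_{q}$ via the standard recurrences $G_{q+2}(x)=G_{q}(x)-2g_{q+2}(x)$ and $g_{q+2i}(x)=x^{i}g_{q}(x)/\{q(q+2)\cdots(q+2i-2)\}$. Iterating the first relation and using $\sum_{i=0}^{3}R_{i}=0$ to cancel the bare $G_{q}(x)$ contributions yields
\[
\frac{1}{24n}\sum_{i=0}^{3}R_{i}G_{q+2i}(x)=-\frac{g_{q}(x)}{12n}\Bigl[(R_{1}+R_{2}+R_{3})\frac{x}{q}+(R_{2}+R_{3})\frac{x^{2}}{q(q+2)}+R_{3}\frac{x^{3}}{q(q+2)(q+4)}\Bigr].
\]
Imposing $\Pr(S^{*}\leq x)=G_{q}(x)+o(n^{-1})$, dividing through by $g_{q}(x)$, and matching the coefficients of $x$, $x^{2}$ and $x^{3}$ forces $c=(R_{1}+R_{2}+R_{3})/(12nq)$, $b=(R_{2}+R_{3})/\{12nq(q+2)\}$ and $a=R_{3}/\{12nq(q+2)(q+4)\}$. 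Finally, the definitions $R_{1}=3A_{3}-2A_{2}+A_{1}$, $R_{2}=A_{2}-3A_{3}$, $R_{3}=A_{3}$ give $R_{1}+R_{2}+R_{3}=A_{1}-A_{2}+A_{3}$, $R_{2}+R_{3}=A_{2}-2A_{3}$ and $R_{3}=A_{3}$, reproducing exactly the expressions for $a$, $b$ and $c$ in the statement.

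The derivation is essentially bookkeeping once the chi-square recurrences are in hand, so I do not expect a genuine conceptual obstacle; the points needing care are performing the series inversion $g^{-1}$ to the correct order and checking that the induced shift interacts with the $O(n^{-1})$ sum only at order $O(n^{-2})$, so that no cross term is dropped. The structural fact that makes the whole scheme work is $\sum_{i=0}^{3}R_{i}=0$, which forces the bare $G_{q}$ contributions to cancel and thereby guarantees that a correction of the purely multiplicative form \eqref{corr_grad} exists; this is exactly the hypothesis under which \cite{CordeiroFerrari1991} derive their general coefficients, so the corollary may alternatively be obtained by direct specialisation of their formula.
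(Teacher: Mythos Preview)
Your proposal is correct and follows essentially the same approach as the paper: the paper simply invokes Theorem~\ref{theorem1} together with the general Bartlett-type correction of \cite{CordeiroFerrari1991} without further detail, while you spell out the underlying change-of-variable and chi-square-recurrence computation that identifies $a$, $b$, $c$. Your derivation is a faithful (and more explicit) instantiation of exactly that result, and the coefficient bookkeeping checks out.
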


The factor $\{1-(c+bS+aS^{2})\}$ in \eqref{corr_grad} can be regarded as a Bartlett-type
correction factor for the gradient statistic in such a way that the
null distribution of $S^{*}$ is better approximated
by the reference $\chi^2$ distribution than the distribution of the uncorrected
gradient statistic.

Instead of modifying the test statistic as in \eqref{corr_grad},
we may modify the reference $\chi^2$ distribution using the inverse
expansion formula in \cite{HillDavis1968}. To be specific, let $\gamma$ be the desired 
level of the test, and $x_{1-\gamma}$ be the $1-\gamma$ percentile of 
the $\chi^2$ limiting distribution of the test statistic.
From expansion \eqref{cdf_grad}, we have the following corollary.
\begin{col}
The asymptotic expansion for the $1-\gamma$ percentile of $S$ to order $n^{-1}$ takes the form
\begin{align}\label{HD}
\begin{split}
z_{1-\gamma} &= x_{1-\gamma} + \frac{1}{12n}\biggl[\frac{A_{3}x_{1-\gamma}}
{q(q+2)(q+4)}\bigl\{x_{1-\gamma}^2 + (q+4)x_{1-\gamma}+ (q+2)(q+4)\bigr\}\\
&\qquad\qquad\quad +\frac{x_{1-\gamma}(x_{1-\gamma} + q + 2)}{q(q+2)}(A_{2} - 3A_{3}) + \frac{x_{1-\gamma}}{q}
(3A_{3} - 2A_{2} + A_{1})\biggr],
\end{split}
\end{align}
where $\Pr(\chi_{q}^2\geq x_{1-\gamma})=\gamma$.
\end{col}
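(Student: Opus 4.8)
The plan is to invert the cumulative distribution expansion \eqref{cdf_grad} to recover the quantile, following the Cornish--Fisher-type inversion scheme of \cite{HillDavis1968}. Writing $H(x)=\sum_{i=0}^{3}R_iG_{q+2i}(x)$, expansion \eqref{cdf_grad} reads $\Pr(S\le x)=G_q(x)+(24n)^{-1}H(x)+o(n^{-1})$. By definition $z_{1-\gamma}$ satisfies $\Pr(S\le z_{1-\gamma})=1-\gamma$, whereas $x_{1-\gamma}$ satisfies $G_q(x_{1-\gamma})=1-\gamma$ because $\Pr(\chi_q^2\ge x_{1-\gamma})=\gamma$. Since the correction is of order $n^{-1}$, I would posit the ansatz $z_{1-\gamma}=x_{1-\gamma}+n^{-1}\delta+o(n^{-1})$ for an unknown shift $\delta$ to be determined, so that at order $n^{-1}$ a single Taylor step suffices.

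First I would substitute this ansatz into \eqref{cdf_grad} and expand. Writing $g_z(x)=G_z'(x)$ for the density of a $\chi^2_z$ variable, one gets $G_q(z_{1-\gamma})=G_q(x_{1-\gamma})+n^{-1}\delta\,g_q(x_{1-\gamma})+o(n^{-1})$, while the correction contributes only $H(z_{1-\gamma})=H(x_{1-\gamma})+o(1)$, its own $\delta/n$ adjustment being of higher order. Equating both sides to $1-\gamma=G_q(x_{1-\gamma})$ cancels the leading terms and yields $\delta\,g_q(x_{1-\gamma})+24^{-1}H(x_{1-\gamma})=0$, that is, $\delta=-H(x_{1-\gamma})/\{24\,g_q(x_{1-\gamma})\}$. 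The problem thus reduces to evaluating the ratio $H(x)/g_q(x)$ in closed form.

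The key simplification exploits the identity $R_0=-(R_1+R_2+R_3)$, which lets me rewrite $H(x)=\sum_{i=1}^{3}R_i\{G_{q+2i}(x)-G_q(x)\}$, so that every summand is a difference of chi-square distribution functions. I would then invoke the elementary recurrences $g_{z+2}(x)=(x/z)\,g_z(x)$ and $G_z(x)-G_{z+2}(x)=2\,g_{z+2}(x)$; telescoping these expresses $G_q(x)-G_{q+2i}(x)$ as a degree-$i$ polynomial in $x$ times $g_q(x)$, with denominators $q$, $q(q+2)$ and $q(q+2)(q+4)$ for $i=1,2,3$. Dividing by $g_q(x)$ therefore eliminates the density entirely and leaves a rational function of $x$.

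Finally I would substitute $R_1=3A_3-2A_2+A_1$, $R_2=A_2-3A_3$, $R_3=A_3$ and collect the coefficients of $A_1$, $A_2$, $A_3$; the factor $-1/24$ combines with the $-2$ produced by the differences $G_{q+2i}-G_q$ to yield the overall $1/(12n)$, and the result is precisely \eqref{HD}. The only bookkeeping lies in the telescoping step and in matching the polynomial coefficients, but no genuine obstacle arises there, since all the substantive content is already carried by the established expansion \eqref{cdf_grad}.
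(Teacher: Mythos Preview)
Your proposal is correct and follows exactly the route the paper indicates: the paper does not give a detailed proof but simply invokes the inverse expansion formula of \cite{HillDavis1968} applied to \eqref{cdf_grad}, and your argument is precisely a self-contained execution of that Cornish--Fisher-type inversion. The recurrences $g_{z+2}(x)=(x/z)g_z(x)$ and $G_z(x)-G_{z+2}(x)=2g_{z+2}(x)$ together with $R_0=-(R_1+R_2+R_3)$ do give the stated rational expressions, and the final substitution of $R_1,R_2,R_3$ in terms of $A_1,A_2,A_3$ reproduces \eqref{HD} verbatim.
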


In general, equations \eqref{corr_grad} and \eqref{HD} depend on unknown parameters.
In this case, we can replace these unknown parameters by their maximum likelihood
estimates obtained under $\mathcal{H}_{0}$. It should be noticed
that the improved gradient test of the null hypothesis $\mathcal{H}_{0}$ may be
performed in three ways: (i) by referring the corrected statistic $S^*$ in \eqref{corr_grad}
to the $\chi_q^2$ distribution; (ii) by referring the gradient statistic $S$ to the
approximate cumulative distribution function \eqref{cdf_grad}; (iii) by comparing $S$ with the modified upper
percentile in \eqref{HD}. These three procedures are equivalent to order $n^{-1}$.

Finally, the three moments, up to order $n^{-1}$ under the null
hypothesis, of the gradient statistic are presented in the following corollary.
\begin{col}\label{col_momen_grad}
The first three moments, up to order $n^{-1}$ under the null hypothesis, of the gradient statistics are
\[
\mu_{1}'(S)=q+\frac{A_{1}}{12n}, \qquad  \mu_{2}(S)=2q+\frac{A_1+A_2}{3n},
\]
\[
\mu_{3}(S)= 8q+\frac{2(A_1 +2A_2+A_3)}{n}.
\]
\end{col}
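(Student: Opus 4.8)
The plan is to read the moments directly off the signed chi-square mixture established in Theorem~\ref{theorem1}. Expansion \eqref{cdf_grad} writes the null distribution of $S$ as a signed combination $G_q+\frac{1}{24n}\sum_{i=0}^{3}R_i\,G_{q+2i}$ of chi-square distribution functions (up to $o(n^{-1})$); since the total weight $1+\frac{1}{24n}\sum_{i=0}^{3}R_i$ equals $1$ by virtue of $R_0=-(R_1+R_2+R_3)$, this is a genuine (signed) mixture, and by linearity of the moment functional the $k$th raw moment is the corresponding combination of chi-square moments, $\mu_k'(S)=m_k^{(q)}+\frac{1}{24n}\sum_{i=0}^{3}R_i\,m_k^{(q+2i)}+o(n^{-1})$, where $m_k^{(z)}=z(z+2)\cdots(z+2k-2)$ is the $k$th moment of a $\chi^2_z$ variable. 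First I would record $m_1^{(z)}=z$, $m_2^{(z)}=z(z+2)$ and $m_3^{(z)}=z(z+2)(z+4)$ and assemble the four resulting sums.

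Next I would substitute $R_1=3A_3-2A_2+A_1$, $R_2=A_2-3A_3$ and $R_3=A_3$ into the three sums and collect powers of $q$. Because $\sum_{i=0}^{3}R_i=0$, the $O(1)$ part of $\mu_k'(S)$ is exactly $m_k^{(q)}$, and each correction reduces to a combination of the differences $m_k^{(q+2i)}-m_k^{(q)}$. Carrying out the collection, I expect the raw moments $\mu_1'(S)=q+A_1/(12n)$, $\mu_2'(S)=q(q+2)+\{(q+2)A_1+2A_2\}/(6n)$ and $\mu_3'(S)=q(q+2)(q+4)+\{(q^2+6q+8)A_1+4(q+4)A_2+8A_3\}/(4n)$. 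A notable feature is that, once the $R$'s are written through the $A$'s, the coefficient of $A_3$ in $\mu_2'(S)$ cancels identically and several $q$-polynomials collapse; these cancellations are what eventually yield the compact constants in the central moments.

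Finally I would pass to central moments through $\mu_2(S)=\mu_2'(S)-\{\mu_1'(S)\}^2$ and $\mu_3(S)=\mu_3'(S)-3\mu_2'(S)\mu_1'(S)+2\{\mu_1'(S)\}^3$, retaining only terms up to $O(n^{-1})$. Since $\mu_1'(S)=q+A_1/(12n)$, this requires $\{\mu_1'(S)\}^2=q^2+qA_1/(6n)+o(n^{-1})$, $\{\mu_1'(S)\}^3=q^3+q^2A_1/(4n)+o(n^{-1})$, and a first-order expansion of the product $\mu_2'(S)\mu_1'(S)$. Collecting the surviving $O(1)$ and $O(n^{-1})$ terms then gives $\mu_2(S)=2q+(A_1+A_2)/(3n)$ and $\mu_3(S)=8q+2(A_1+2A_2+A_3)/n$, as claimed; the first moment, being both raw and central, needs no such conversion.

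I do not anticipate a genuine conceptual obstacle: the whole argument is a finite, purely algebraic reduction from the known chi-square moments. The only delicate point is bookkeeping --- truncating consistently at $O(n^{-1})$ when squaring and cubing $\mu_1'(S)$ and when forming $\mu_2'(S)\mu_1'(S)$, and verifying that the numerous $q$-dependent terms cancel so as to leave the constant coefficients $2$, $4$, $2$ in $\mu_3(S)$ and the factor $1/3$ in $\mu_2(S)$. This cancellation is guaranteed in advance by $\sum_i R_i=0$ for the $O(1)$ parts, but for the $O(n^{-1})$ parts it has to be checked by direct computation.
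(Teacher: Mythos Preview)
Your approach is correct and coincides with what the paper does implicitly: Corollary~\ref{col_momen_grad} is stated without a separate proof because it follows by direct moment computation from the signed chi-square mixture \eqref{cdf_grad} (equivalently, from the characteristic function \eqref{cf_approx} in Appendix~1). Your algebra is accurate, including the key cancellations that remove all $q$-dependence from the $O(n^{-1})$ coefficients of $\mu_2(S)$ and $\mu_3(S)$.
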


In the next sections, we consider some applications of the general results derived in
this section in two special cases: a one-parameter model and a two-parameter model
under orthogonality of parameters.

\section{The one-parameter case}\label{onefamily}

We initially assume that the model is indexed by a scalar unknown parameter, say $\phi$.
The interest lies in testing the null hypothesis
$\mathcal{H}_{0}:\phi=\phi_{0}$ against $\mathcal{H}_{a}:\phi\neq\phi_{0}$,
where $\phi_{0}$ is a fixed value.
Let $\kappa_{\phi\phi}=\Es(\partial^{2} \ell(\phi)/\partial\phi^{2})$,
$\kappa_{\phi\phi\phi}=\Es(\partial^{3} \ell(\phi)/\partial\phi^{3})$,
$\kappa_{\phi\phi\phi\phi}=\Es(\partial^{4} \ell(\phi)/\partial\phi^{4})$,
$\kappa_{\phi\phi}^{(\phi)}=\partial \kappa_{\phi\phi}/\partial\phi$,
$\kappa_{\phi\phi\phi}^{(\phi)}=\partial \kappa_{\phi\phi\phi}/\partial\phi$, and
$\kappa_{\phi\phi}^{(\phi\phi)}=\partial^{2} \kappa_{\phi\phi}/\partial\phi^{2}$.
The gradient statistic for testing $\mathcal{H}_{0}$ is
$S=nU(\phi_{0})(\widehat\phi-\phi_{0})$, where $\widehat{\phi}$
is the maximum likelihood estimator of $\phi$. Here, $A_1$, $A_2$, and $A_3$
given in Corollary \ref{corollary1} reduce to
\begin{equation}\label{c3}
A_{1}=\frac{6\kappa_{\phi\phi}(2\kappa_{\phi\phi}^{(\phi\phi)}
-\kappa_{\phi\phi\phi}^{(\phi)})+12\kappa_{\phi\phi}^{(\phi)}(\kappa_{\phi\phi\phi}
-2\kappa_{\phi\phi}^{(\phi)})}{\kappa^{3}_{\phi\phi}},
\end{equation}
\begin{equation}\label{c2}
A_{2}=\frac{12\kappa_{\phi\phi}(2\kappa_{\phi\phi\phi}^{(\phi)}
-3\kappa_{\phi\phi\phi\phi})+3\kappa_{\phi\phi\phi}(5\kappa_{\phi\phi\phi}
-16\kappa_{\phi\phi}^{(\phi)})}{4\kappa^{3}_{\phi\phi}},
\end{equation}
\begin{equation}\label{c1}
A_{3}=-\frac{5\kappa_{\phi\phi\phi}^{2}}{4\kappa^{3}_{\phi\phi}}.
\end{equation}

We now present some examples.
\begin{ex}(Exponential distribution)\end{ex}
Let $x_{1},\ldots,x_{n}$ be a random sample of an exponential
distribution with density
\[
f(x;\phi)=\displaystyle{\frac{1}{\phi}}\e^{-x/\phi},\qquad x>0,\qquad \phi>0.
\]
Here, $\kappa_{\phi\phi}=-\phi^{-2}$, $\kappa_{\phi\phi\phi}=4\phi^{-3}$,
and $\kappa_{\phi\phi\phi\phi}=-18\phi^{-4}$.
The gradient statistic assumes the form $S=n(\bar{x}-\phi_{0})^{2} / \phi_{0}^2$,
where $\bar{x}=n^{-1}\sum_{i=1}^{n} x_i$, which equals the score statistic. It is easy to
see that $A_1=0$, $A_{2}=18$, and $A_{3}=20$. The first three moments (up to order $n^{-1}$) of $S$
are  $\mu_{1}'(S)=1$, $\mu_{2}(S)=2+6/n$, and $\mu_{3}(S)= 8+112/n$.
A partial verification of our results can be accomplished by comparing the exact moments of
$S$ with the approximate moments given above.  Since $n\bar{X}$ has a gamma distribution with parameters
$n$ and $1/(n\phi)$, it can be shown that the first three exact moments of $S$ are $1$, $2+6/n$,
and $8+112/n+120/n^2$, respectively. These moments differ from the approximate moments
obtained from Corollary \ref{col_momen_grad} only in terms of order less than $n^{-1}$. The
Bartlett-type corrected gradient statistic obtained from Corollary \ref{corr_grad}
is $S^{*}=S\{1-(3-11S+2S^{2})/(18n)\}$.

\begin{ex}(One-parameter exponential family)\end{ex}
Let $x_{1},\ldots,x_{n}$ be
a random sample of size $n$ in which each $x_i$ has a distribution in the one-parameter
exponential family with density
\[
f(x;\phi)=\frac{1}{\xi(\phi)}\exp{\{-\alpha(\phi)d(x)+v(x)\}},
\]
where $\alpha(\cdot)$, $v(\cdot)$, $d(\cdot)$, and $\xi(\cdot)$ are known functions.
Also, $\alpha(\cdot)$ and $\xi(\cdot)$ are assumed to have first three continuous derivatives,
with $\xi(\cdot) > 0$, $\alpha'(\phi)$, and $\beta'(\phi)$ being different from zero for all
$\phi$ in the parameter space, where
$\beta(\phi) = \xi'(\phi)/\{\xi(\phi)\alpha'(\phi)\}$.
Here, primes denote derivatives with respect to $\phi$. For instance,
$\beta'=\beta'(\phi)={\rm d}\beta(\phi)/{\rm d}\phi$. It can be shown that
$\kappa_{\phi\phi}=-\alpha'\beta'$,
$\kappa_{\phi\phi\phi}=-(2\alpha''\beta'+\alpha'\beta'')$, and
$\kappa_{\phi\phi\phi\phi}=-3\alpha''\beta''-3\alpha'''\beta'-\alpha'\beta'''$.
The gradient statistic takes the form
$S=n(\phi_{0}-\widehat{\phi})\alpha'(\phi_{0})(\beta(\phi_{0})+\bar d)$,
where $\bar{d}=n^{-1}\sum_{i=1}^{n}d(x_{i})$.
From $\eqref{c3}$, $\eqref{c2}$, and $\eqref{c1}$, we can write
\[
A_{1}= \frac{6}{\alpha'\beta'}\left\{2\left(\frac{\beta''}{\beta'}\right)^{2}
+\frac{\alpha''\beta''}{\alpha'\beta'}-\frac{\beta'''}{\beta'}\right\},
\]
\[
A_{2}=\frac{3}{\alpha'\beta'}\left[\frac{\beta''}{\beta'}\left(\frac{4\alpha''}{\alpha'}
-\frac{\beta''}{4\beta'}\right)+3\left\{\left(\frac{\alpha''}{\alpha'}\right)^2
+\left(\frac{\beta''}{\beta'}\right)^2\right\}-\left(\frac{\alpha'''}{\alpha'}-\frac{\beta'''}{\beta'}\right)\right],
\]
\[
A_{3}=\frac{5}{\alpha'\beta'}\left(\frac{\alpha''}{\alpha'}+\frac{\beta''}{2\beta'}\right)^{2}.
\]

We now present some special cases.
\begin{enumerate}
\item Normal ($\phi>0$, $\mu\in\mathbb{R}$, $x\in\mathbb{R}$):
\begin{itemize}
\item $\mu$ known: $\alpha(\phi)=1/(2\phi)$, $\xi(\phi)=\phi^{1/2}$,
$d(x)=(x-\mu)^2$, and $v(x)=-\log(2\pi)/2$. We have
$A_{1}=0$, $A_{2}=36$, and $A_{3}=40$. The first three moments of $S$
up to order $n^{-1}$ are $\mu_{1}'(S)=1$,
$\mu_{2}(S)=2(1+6/n)$, and $\mu_{3}(S)=8(1+29/n)$. The Bartlett-corrected
gradient statistic is $S^*=S\{1-(1-11S/3+2S^{2}/3)/(3n)\}$.
\item $\phi$ known: $\alpha(\mu)=-\mu/\phi$, $\xi(\mu)=\exp(\mu^2/2\phi)$,
$d(x)=x$, and $v(x)=-x^2/2 -\log(2\pi\phi)/2$. Here, $A_1=A_2=A_3=0$, as expected.
\end{itemize}
\item Inverse normal ($\phi>0$, $\mu>0$, $x>0$):
\begin{itemize}
\item $\mu$ known: $\alpha(\phi)=\phi$, $\xi(\phi)=1/\phi^{1/2}$,
$d(x)=(x-\mu)^2/(2\mu^2 x)$, and $v(x)=-\log(2\pi x^3)/2$. Here,
$A_1=24$, $A_2=30$, and $A_3=10$, and the three first moments of $S$ are
$\mu_{1}'(S)=1+2/n$, $\mu_{2}(S)=2+18/n$, and $\mu_{3}(S)=8+188/n$.
The Bartlett-corrected gradient statistic takes the form $S^*=S\{1-(S+2)(S+3)/(18n)\}$.
\item $\phi$ known: $\alpha(\mu)=\phi/(2\mu^2)$, $\xi(\phi)=\exp(-\phi/\mu)$,
$d(x)=x$, and $v(x)=-\phi/(2x^2)+\log(2\pi x^3)/2$. We have $A_1=0$ and
$A_{2}=A_{3}=45\mu/\phi$. The first three approximate moments of $S$ are $\mu_{1}'(S)=1$,
$\mu_{2}(S)=2+15\mu/(n\phi)$, and $\mu_{3}(S)=8+270\mu/(n\phi)$.
Also, $S^*=S\{1-\mu S(S-5)/(4  n \phi)\}$.
\end{itemize}
\item Gamma ($k$ known, $k>0$, $\phi>0$, $x>0$):
$\alpha(\phi)=\phi$, $\xi(\phi)=\phi^{-k}$, $d(x)=x$, and
$v(x)=(k-1)\log x-\log\Gamma(k)$, where $\Gamma(\cdot)$ denotes the gamma
function. We have $A_1=12/k$, $A_2=15/k$,  $A_3=5/k$, and first three approximate
moments $\mu_{1}'(S)=1+1/(nk)$, $\mu_{2}(S)=2+9/(nk)$, and $\mu_{3}(S)=8+94/(nk)$.
Also, $S^*=S\{1-(S+2)(S+3)/(36nk)\}$.
\item Truncated extreme value ($\phi>0$, $x>0$): $\alpha(\phi)=1/\phi$,
$\xi(\phi)=\phi$, $d(x)=\exp{(x)}-1$, and $v(x)=x$. We have
$A_1=0$, $A_2=12$, $A_3=20$,  $\mu_{1}'(S)=1$,
$\mu_{2}(S)=2+4/n$, $\mu_{3}(S)=8+88/n$, and $S^*=S\{1-(12-15S+2S^2)/(18n)\}$.
\item Pareto ($\phi>0$, $k>0$, $k$ known, $x>k$):
$\alpha(\phi)=1+\phi$, $\xi(\phi)=(\phi k^{\phi})^{-1}$,
and $v(x)=0$. Here, $A_1=12$, $A_2=15$, $A_3=5$,
$\mu_{1}'(S)=1+1/n$, $\mu_{2}(S)=2+9/n$, $\mu_{3}(S)=8+94/n$, and
$S^*=S\{1-(S+2)(S+3)/(36n)\}$.
\item Power ($\theta>0$, $\phi>0$, $\theta$ known, $x>\phi$):
$\alpha(\phi)=1-\phi$, $\xi(\phi)=\phi^{-1}\theta^{\phi}$, and
$v(x)=0$. The $A$'s, the first three approximate moments, and the Bartlett-type corrected
statistic coincide with those obtained for the Pareto distribution.
\item Laplace ($\theta>0$, $k\in\mathbb{R}$, $k$ known, $x\in\mathbb{R}$):
$\alpha(\theta) = \theta^{-1}$, $\zeta(\theta) = 2\theta$,
$d(x) = |x - k|$, and $v(x) = 0$. We have $A_1=0$, $A_2=18$, $A_3=20$,
$\mu_{1}'(S)=1$, $\mu_{2}(S)=2+6/n$, $\mu_{3}(S)= 8+112/n$,
and $S^{*}=S\{1-(3-11S+2S^{2})/(18n)\}$.
\end{enumerate}

\section{Models with two orthogonal parameters}\label{twofamily}

The two-parameter families of distributions under orthogonality of
the parameters \citep{CoxReid1987}, say $\phi$ and $\beta$, will be the subject of this section.
The null hypothesis under test is $\mathcal{H}_0:\phi=\phi_{0}$, where $\phi_{0}$ is a fixed value,
and $\beta$ acts as a nuisance parameter.
The orthogonality between $\phi$ and $\beta$ leads to considerable simplification
in the formulas of $A_1,$ $A_2$, and $A_3$.
Here, $\kappa_{\phi\phi\beta}=\Es(\partial^{3}\ell(\bm{\theta})/\partial\beta\partial\phi^2)$,
$\kappa_{\phi\phi\beta}^{(\beta)}=\partial \kappa_{\phi\phi\beta}/\partial\beta$, etc.
After some algebra, we have
\begin{equation}\label{ortg_coef}
A_{1}=A_{1\phi}+ A_{1\phi\beta},\qquad
A_{2}=A_{2\phi}+ A_{2\phi\beta},\qquad
A_{3}=-\frac{5\kappa_{\phi\phi\phi}^{2}}{4\kappa^{3}_{\phi\phi}},
\end{equation}
where $A_{1\phi}$ and $A_{2\phi}$ are equal to $A_1$ and $A_2$
given in \eqref{c3} and \eqref{c2}, respectively, and
\begin{align*}
A_{1\phi\beta}&=\frac{3\bigl\{4\kappa_{\phi\phi\beta}\kappa_{\phi\phi}^{(\beta)}
+\kappa_{\phi\beta\beta}\bigl(4\kappa_{\phi\phi}^{(\phi)}-\kappa_{\phi\phi\phi}\bigr)\bigr\}}
{\kappa_{\phi\phi}^{2}\kappa_{\beta\beta}}
+\frac{6\bigl(\kappa_{\phi\phi\beta\beta}-2\kappa_{\phi\phi\beta}^{(\beta)}
-2\kappa_{\phi\beta\beta}^{(\phi)}\bigr)}{\kappa_{\phi\phi}\kappa_{\beta\beta}}\\
&\quad+\frac{3\bigl\{2\kappa_{\phi\phi\beta}\bigl(2\kappa_{\beta\beta}^{(\beta)}
-\kappa_{\beta\beta\beta}\bigr)+\kappa_{\phi\beta\beta}\bigl(2\kappa_{\beta\beta}^{(\phi)}
-3\kappa_{\phi\beta\beta}\bigr)\bigr\}}{\kappa_{\phi\phi}\kappa_{\beta\beta}^2},
\end{align*}
\[
A_{2\phi\beta}=\frac{3\bigl(3\kappa_{\phi\phi\phi}\kappa_{\phi\beta\beta}
+\kappa^{2}_{\phi\phi\beta}\bigr)}{\kappa_{\phi\phi}^{2}\kappa_{\beta\beta}}.
\]
The expressions for $A_{1\phi\beta}$ and $A_{2\phi\beta}$ in \eqref{ortg_coef} 
can be regarded as the additional contribution
introduced in the expansion of the cumulative distribution function of the gradient statistic
owing to the fact that $\beta$ is unknown and has to be estimated from the data.
In the following, we present some examples.

\begin{ex}(Normal distribution)\end{ex}
Let $x_1,\ldots,x_n$ be a random sample from a normal distribution $N(\phi,\beta)$.
The gradient statistic can be written in the form
\[
S=n\frac{T_1/T_2}{1+T_1/T_2},
\]
where $T_1=n(\bar{x}-\phi_{0})^2$ and $T_2=\sum_{i=1}^{n}(x_i-\bar{x})^{2}$,
where $\bar{x}=n^{-1}\sum_{i=1}^{n}x_i$. Under the null hypothesis,
$T_1/\beta$ and $T_2/\beta$ are independent with distributions $\chi^2_{1}$ and $\chi^{2}_{n-1}$,
respectively. It can be shown that $n^{-1}S$ has a beta
distribution with parameters $1/2$ and $(n-1)/2$.
The first three exact moments of $S$ are $1$, $2(n-1)/(n+2)$, and $8(n-1)(n-2)/\{(n+2)(n+4)\}$,
respectively. Here, $A_1=A_3=0$ and $A_2=-18$. The first
three approximate moments of $S$ are
$\mu'(S)=1$, $\mu_{2}(S)=2-6/n$, and $\mu_{3}(S)=8-72/n$.
These moments differ from the approximate moments
only by terms of order less than $n^{-1}$.
The Bartlett-type corrected gradient statistic is $S^*=S\{1-(3-S)/(2n)\}$.

\begin{ex}(Bivariate two-parameter exponential distribution)\end{ex}
Let $x_{11},\ldots,x_{1n_{1}}$ and $x_{21},\ldots,x_{2n_{2}}$ be two independent random samples
from exponential distributions with means  $\mu$ and $\phi\mu$,
respectively. It can be shown that
$\phi$ and $\beta=\mu\phi^{1/2}$ are globally orthogonal.
The parameter of interest is $\phi$ -- the ratio of the means --
and the interest lies in testing $\mathcal{H}_{0}:\phi=1$, which
is equivalent to the equality of the two population means, against $\mathcal{H}_{a}:\phi\neq 1$.
We consider the balanced case ($n_1=n_2=n/2$, $n\geq 2$ even).
Let $\bar{x}_1$ and $\bar{x}_2$ be the sample means.
The log-likelihood function can be written as
\[
\ell(\phi,\beta)=-\log\beta-\frac{\bar{x}_{1}}{2\beta\phi^{-1/2}}-\frac{\bar{x}_2}{2\beta\phi^{^1/2}}.
\]
The gradient statistic for testing $\mathcal{H}_{0}$ takes the form
\[
S=\frac{n(\bar{x}_1 -\bar{x}_2)^2}{4\bar{x}_1\bar{x}},
\]
where $\bar{x}=(\bar{x}_1 +\bar{x}_2)/2$.
The cumulants of log-likelihood derivatives are
$\kappa_{\phi\phi}=-1/(4\phi^2)$, $\kappa_{\phi\phi\phi}=3/(4\phi^3)$,
$\kappa_{\phi\phi\phi\phi}=-45/(16\phi^{4})$,
$\kappa_{\beta\beta}=-1/\beta^2$, $\kappa_{\beta\beta\beta}=4/\beta^{3}$,
$\kappa_{\beta\phi\phi}=0$, $\kappa_{\beta\phi}=0$, $\kappa_{\phi\phi\beta}=1/(4\beta\phi^2)$,
and $\kappa_{\beta\beta\phi\phi}=-1/(2\beta^2 \phi^2)$.
From \eqref{ortg_coef}, we have $A_1=24$, $A_{2}=63$, and $A_{3}=45$.
The corrected gradient statistic becomes $S^{*}=S\{1-(S-1)(S-2)/(4n)\}$.

\begin{ex}(Two-parameter Birnbaum--Saunders distribution)\end{ex} 
The two-parameter Birnbaum--Saunders distribution was proposed by
\cite{BSa1969} and has cumulative distribution function in the form
$G(x)=\Phi(v)$, with $x > 0$, where $v=\phi^{-1}\rho(x/\beta)$, $\rho(z)= z^{1/2}-z^{-1/2}$, and
$\Phi(\cdot)$ is the standard normal cumulative distribution function;
$\phi>0$ and $\beta>0$ are the shape and scale parameters,
respectively. We wish to test $\mathcal{H}_{0}:\phi=\phi_{0}$ against the alternative
hypothesis $\mathcal{H}_{a}:\phi\neq\phi_{0}$,
where $\phi_{0}$ is a known positive constant. The
gradient statistic to test $\mathcal{H}_{0}$ is
\[
S=\frac{n(\widehat{\phi}-\phi_{0})}{\phi_{0}^3}\bigl\{\bar{s} + \bar{r} - (2 + \phi_{0}^2)\},
\]
where $\bar{s}=(n\widetilde{\beta})^{-1}\sum_{i=1}^nx_i$,
$\bar{r}=\widetilde{\beta}n^{-1}\sum_{i=1}^n x_i^{-1}$, and $\widetilde{\beta}$
is the maximum likelihood estimator of $\beta$ obtained under $\mathcal{H}_{0}$.
We have $\kappa_{\phi\phi} = -2/\phi^2$, $\kappa_{\phi\beta} = 0$,
and $\kappa_{\beta\beta} =  -\{1 + \phi(2\pi)^{-1/2}h(\phi)\}/(\phi^2\beta^2)$,
where $h(\phi) = \phi(\pi/2)^{1/2} - \pi\e^{2/\phi^2}\{1 - \Phi({2}/{\phi})\}$.
After some algebra, we obtain $A_{1\phi}=-3$, $A_{2\phi}=69/8$,
$A_{2\phi\beta}=-45(2+\phi^2)/[2\{1 + \phi(2\pi)^{-1/2}h(\phi)\}]$, $A_3=125/8$, and
\[
A_{1\phi\beta}=\frac{9-15\phi^2/2}{1 + \phi(2\pi)^{-1/2}h(\phi)}
-\frac{3(\phi^2+ 2)}{2\{1 + \phi(2\pi)^{-1/2}h(\phi)\}^2}
\biggl\{-4(1+\phi^2)+\frac{2(4+\phi^2)h(\phi)}{\phi\sqrt{2\pi}}\biggr\}.
\]
Since the necessary quantities to obtain the $A$'s were derived, a Bartlett-corrected
gradient statistic may be obtained from
Corollary \ref{col_corr_grad}. It is interesting to note
that the $A$'s do not depend on the unknown scalar parameter $\beta$.
Next, we shall present a small Monte Carlo simulation regarding the test of
the null hypothesis $\mathcal{H}_{0}:\phi=1$.

The simulations were performed by setting $\beta=1$ and sample sizes
ranging from 5 to 22 observations. All results are based on 10,000 replications.
The size distortions (i.e.~estimated minus nominal sizes) for the 5\%
nominal level of the gradient statistic and its
Bartlett-corrected version for different sample
sizes are plotted in Figure \ref{plots}(a). It is clear from this figure
that the Bartlett-corrected test displays smaller size distortions
than the original gradient test.

Finally, we set $n=10$ and consider
the first-order approximation ($\chi_{1}^2$ distribution)
for the distribution of the gradient statistic and the expansion
obtained in this paper. Figure \ref{plots}(b) presents the curves.
The difference between the curves is evident from this figure,
and hence, the $\chi_{1}^2$ distribution may not be a good
approximation for the null distribution of the gradient statistic in testing the
null hypothesis $\mathcal{H}_{0}:\phi=1$ for the two-parameter
Birnbaum--Saunders model if the sample is small.
\begin{figure}[!htp]
\centering
\includegraphics[scale=0.5]{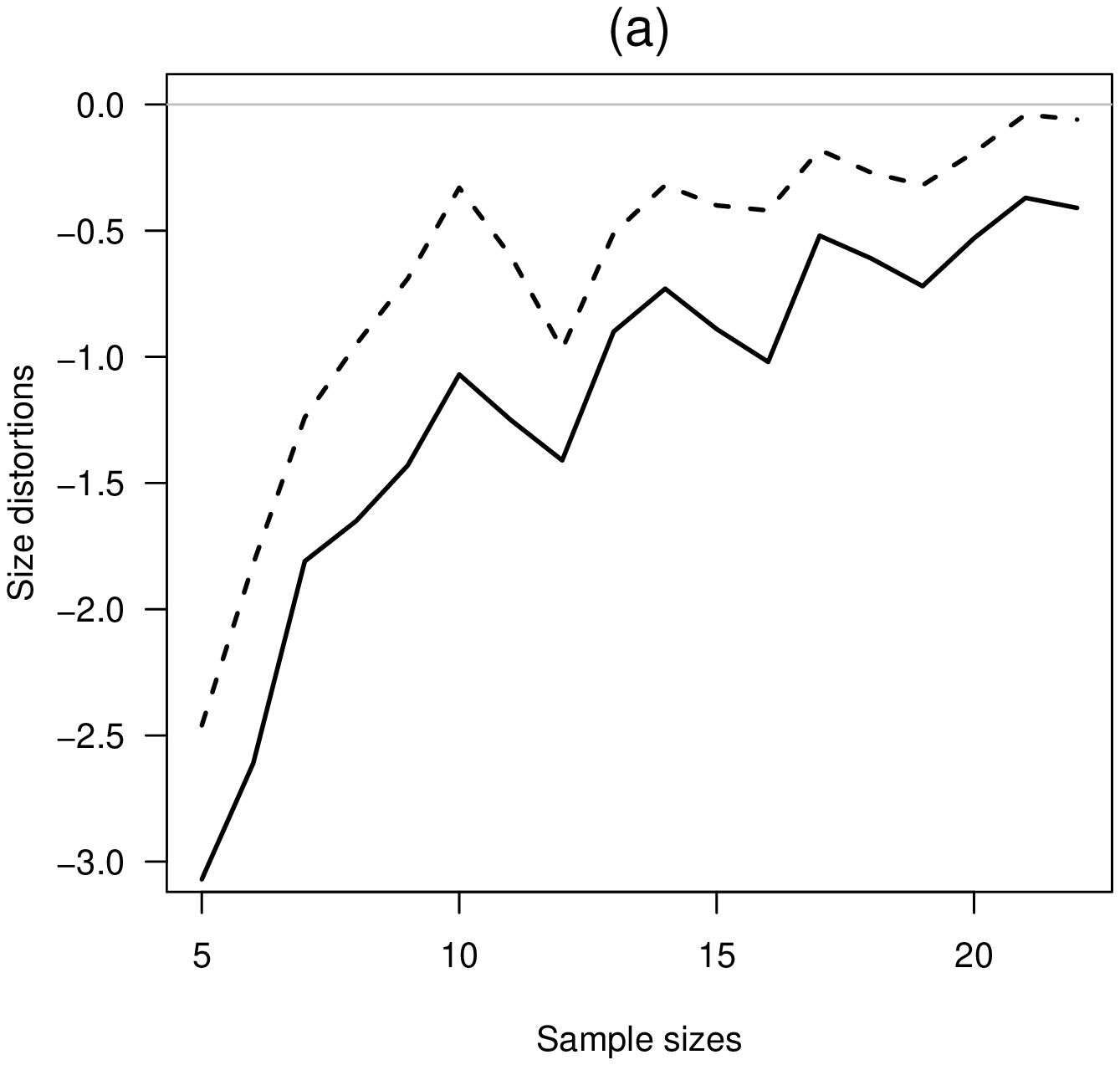}
\includegraphics[scale=0.5]{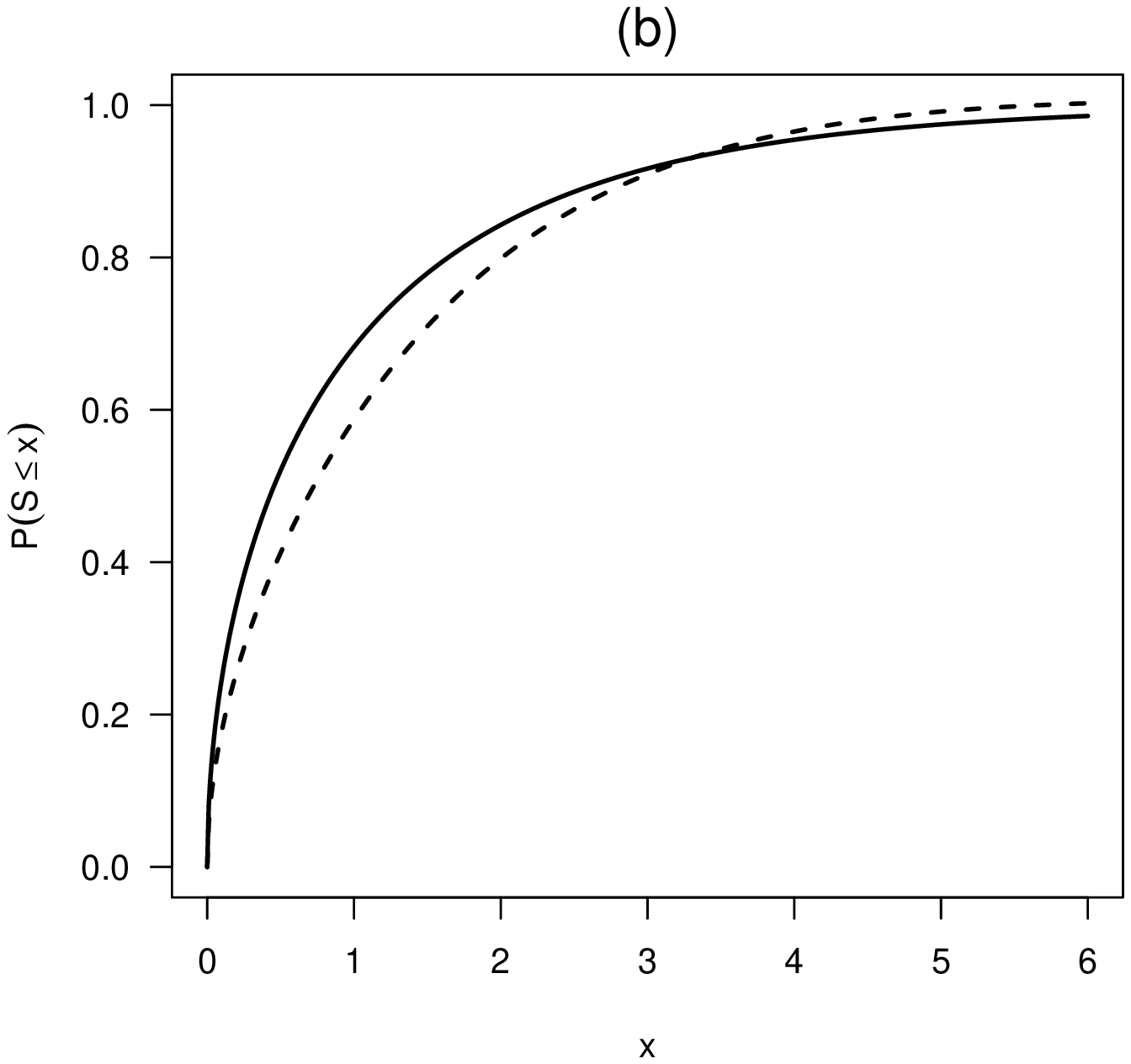}
\caption{(a) Size distortion of the gradient test (solid) and the
Bartlett-corrected gradient test (dashes);
(b) first-order approximation (solid) and expansion to order $n^{-1}$ (dashes)
of the null cumulative distribution function of the gradient statistic.}\label{plots}
\end{figure}

\section{Discussion}\label{conclusion}

\cite{LemonteFerrari2012a} showed that the gradient test can be
an interesting alternative to the classic large-sample tests,
namely the likelihood ratio, the Wald, and the Rao score tests,
since none is uniformly superior to the others in terms of second-order local power.
Additionally, as remarked before, the gradient statistic
does not require to obtain, estimate, or invert an information matrix,
unlike the Wald and the Rao score statistics. Its formal simplicity
is always an attraction.

The exact null distribution of the gradient statistic is
usually unknown and the test relies upon an asymptotic
approximation. The chi-square distribution is used as a large-sample approximation
to the true null distribution of this statistic. However, for small sample sizes,
the chi-square distribution may be a poor approximation to the true null
distribution; that is, the asymptotic approximation may deliver
inaccurate inference. In order to overcome this shortcoming,
an alternative strategy is to use a higher-order asymptotic theory.

The asymptotic expansion up to order $n^{-1}$ for the null distribution function of
the gradient statistic was derived in this paper. A Bayesian route based on the
shrinkage argument  \citep{GhoshMukerjee1991, MukerjeeReid2000} proved to be extremely useful
in this context. The expansion is very general in the sense that
the null hypothesis can be composite in the presence of nuisance parameters.
We show that the coefficients which define this expansion
depend on the joint cumulants of log likelihood derivatives for the full data.
Unfortunately, these coefficients are very difficult to interpret in generality.

\cite{CordeiroFerrari1991} showed that, quite generally, continuous statistics having
a chi-square distribution asymptotically can be modified by a suitable correction term
that makes the modified statistic have chi-square distribution to order $n^{-1}$. Their work
can be viewed as an extension of Bartlett corrections to the likelihood ratio statistic
\citep{Lawley1956} to other statistics having a chi-square distribution asymptotically. The
correction term comes from the coefficients of the $O(n^{-1})$ term in the expansion of
the cumulative distribution function of the test statistic in
such a way that it becomes better approximated by the
reference chi-square distribution. It is known as the Bartlett-type correction.
It is well known that Bartlett and Bartlett-type corrections have become a widely used
method for improving the large-sample chi-square
approximation to the null distribution of the likelihood ratio and Rao score statistics,
respectively. In recent years there has been a renewed interest in Bartlett factors and several
papers have been published giving expressions for computing these corrections for special models.
Some references are \cite{Zucker-et-al-2000}, \cite{LagosMorettin2004}, \cite{Tu-et-al-2005},
\cite{vanGiersbergen2009}, \cite{Bai-2009}, \cite{Lagos-et-al-2010}, and
\cite{Noma-2011}.

From the general expansion derived in this
paper and using results in \cite{CordeiroFerrari1991},
we also obtained a Bartlett-type correction factor for the gradient statistic. Our results are
very general and not tied to special classes of models. They allow the parameter
vector to be multidimensional and are valid regardless of whether nuisance parameters
are present or not. Additionally, as the coefficients in the expansion, and
consequently in the Bartlett-type
correction factor, are written as functions of cumulants of log-likelihood derivatives, they
can be obtained for all the classes of parametric models for which those cumulants
can be determined. Therefore, applications of our general results in several parametric models, such
as the generalised linear models and extensions,
can be studied in future research.

\section*{Acknowledgments}

We gratefully acknowledge grants from FAPESP and CNPq (Brazil).

{\small
\appendix

\section*{Appendix 1}\label{appendixA}
\subsection*{Proof of Theorem \ref{theorem1}}

Except when indicated, the indices $j$, $r$, $s$, $u$, $v$, and $w$
range over $1$ to $p$ and the indices $j'$, $r'$, $s'$, $u'$, $v'$, and $w'$
range over $1$ to $q$. Also, an array index repeated as both a superscript and a subscript
indicates an implied summation over the appropriate range.
Let $\lambda_{jr}=-\psi_{jr}=
-\{D_{j}D_{r}\ell(\bm{\bm{\theta}})\}_{\bm{\bm{\theta}}=\widehat{\bm{\bm{\theta}}}}$,
$\psi_{jrs}=\{D_jD_rD_s \ell(\bm{\bm{\theta}})\}_{\bm{\bm{\theta}}=\widehat{\bm{\bm{\theta}}}}$,
$\psi_{jrsu}=\{D_jD_rD_sD_u \ell(\bm{\bm{\theta}})\}_{\bm{\bm{\theta}}=\widehat{\bm{\bm{\theta}}}}$, etc.
The matrix $\bm{\Lambda}=((\lambda_{jr}))$ is the observed information matrix evaluated at $\widehat{\bm{\bm{\theta}}}$.
The partition of $\bm{\bm{\theta}}=(\bm{\bm{\theta}}_{1}^\top,\bm{\bm{\theta}}_{2}^\top)^\top$ induces the partition
\[
\bm{\Lambda}=((\lambda_{jr})) =
\begin{bmatrix}
\bm{\Lambda}_{11} & \bm{\Lambda}_{12} \\
\bm{\Lambda}_{21} & \bm{\Lambda}_{22}
\end{bmatrix},
\qquad
\bm{\Lambda}^{-1}=((\lambda^{jr})) =
\begin{bmatrix}
\bm{\Lambda}^{11} & \bm{\Lambda}^{12} \\
\bm{\Lambda}^{21} & \bm{\Lambda}^{22}
\end{bmatrix},
\]
where $\bm{\Lambda}^{-1}$ is the inverse of $\bm{\Lambda}$.
Let ${\bm{\Lambda}^{11}}^{-1}=((\lambda_{1w'j'}))$,
$\sigma^{jr}=\lambda^{jr}-\lambda^{jw'}\lambda_{1w'j'}\lambda^{j'r}$,
$\tau^{j j'}=\lambda^{jw'}\lambda_{1w'j'}$, $\sigma^{(1)}_{suvw}=\sigma^{su}\sigma^{vw}[3]$,
$\lambda^{(1)}_{j'r's'u'}=\lambda^{j'r'}\lambda^{s'u'}[3]$, and
$\lambda^{(2)}_{j'r's'u'v'w'}=\lambda^{j'r'}\lambda^{s'u'}\lambda^{v'w'}[15]$,
where $[\cdot]$ denotes a summation with the number in brackets indicating the number of terms
obtained by permutation of indices. For instance,
$\sigma^{su}\sigma^{vw}[3]=\sigma^{su}\sigma^{vw}+\sigma^{sv}\sigma^{uw}+\sigma^{sw}\sigma^{uv}$.
Let $\bm{\epsilon}=(\epsilon_{1},\ldots,\epsilon_q)^{\top}=n^{1/2}(\bm{\bm{\theta}}_{1}-\widehat{\bm{\bm{\theta}}}_{1})$,
$\Psi_{j'}^{(1)}=\psi_{jrs}\sigma^{rs}\tau^{j j'}/2$,
$\Psi_{j'r's'}^{(3)}=\psi_{jrs}\tau^{jj'}\tau^{rr'}\tau^{ss'}/6$,
\[
\Psi_{ j'r's'u'}^{(4)}=\frac{1}{24}\left\{\psi_{jrsu}+\sigma^{vw}(2\psi_{jrs}\psi_{uvw}
+3\psi_{jrv}\psi_{suw})\right\}\tau^{jj'}\tau^{rr'}\tau^{ss'}\tau^{uu'}.
\]

\begin{lemma}\label{lemma1}
An asymptotic expansion under the null hypothesis
for the gradient statistic $\eqref{est_grad}$ is
\begin{equation}\label{grad_expansion}
S=\bm{\epsilon}^{\top}{\bm{\Lambda}^{11}}^{-1}\bm{\epsilon}-\frac{3}{\sqrt{n}}\Psi_{j'r's'}^{(3)}\epsilon_{j'}\epsilon_{r'}\epsilon_{s'}
-\frac{4}{n}\Bigl(\Psi_{j'r's'u'}^{(4)}-\Psi_{j'r's'}^{(3)}\Psi_{u'}^{(1)}\Bigr)\epsilon_{j'}\epsilon_{r'}\epsilon_{s'}\epsilon_{u'}
+ o_p(n^{-1}).
\end{equation}
\end{lemma}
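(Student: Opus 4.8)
The plan is to reduce the gradient statistic to a Taylor expansion of the profile score and then evaluate the resulting profile derivatives by differentiating through the constrained maximiser. First I would use the restricted score equations $U_{j''}(\widetilde{\bm{\theta}})=0$ (the nuisance block, $j''=q+1,\dots,p$) together with the envelope identity to recognise $U_{j'}(\widetilde{\bm{\theta}})$ as the derivative $\bar U_{j'}(\bm{\theta}_1)=\partial_{j'}\ell\bigl(\bm{\theta}_1,\widetilde{\bm{\theta}}_2(\bm{\theta}_1)\bigr)$ of the profile log-likelihood. Since the alternative form $S=n\bm{U}_1(\widetilde{\bm{\theta}})^{\top}(\widehat{\bm{\theta}}_1-\bm{\theta}_{10})$ and $\bm{\epsilon}=n^{1/2}(\bm{\theta}_1-\widehat{\bm{\theta}}_1)$ give $\widehat{\bm{\theta}}_1-\bm{\theta}_1=-n^{-1/2}\bm{\epsilon}$, this yields the exact relation $S=-n^{1/2}\epsilon_{j'}\bar U_{j'}(\bm{\theta}_1)$. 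Because $\widetilde{\bm{\theta}}(\widehat{\bm{\theta}}_1)=\widehat{\bm{\theta}}$, the profile score vanishes at $\widehat{\bm{\theta}}_1$, so I would Taylor expand $\bar U_{j'}$ about $\widehat{\bm{\theta}}_1$ and substitute $\bm{\theta}_1-\widehat{\bm{\theta}}_1=n^{-1/2}\bm{\epsilon}$, obtaining
\[
S=-\bar U_{j'r'}\epsilon_{j'}\epsilon_{r'}-\frac{1}{2\sqrt{n}}\bar U_{j'r's'}\epsilon_{j'}\epsilon_{r'}\epsilon_{s'}-\frac{1}{6n}\bar U_{j'r's'u'}\epsilon_{j'}\epsilon_{r'}\epsilon_{s'}\epsilon_{u'}+o_p(n^{-1}),
\]
where $\bar U_{j'r'}$, $\bar U_{j'r's'}$, $\bar U_{j'r's'u'}$ are the second, third and fourth profile derivatives at $\widehat{\bm{\theta}}_1$; a count of orders shows the fifth derivative contributes only $O_p(n^{-3/2})$ and may be dropped.

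The core of the argument is then to express these profile derivatives through the constrained map $\widetilde{\bm{\theta}}_2(\bm{\theta}_1)$. Writing $\vartheta^{j}_{,j'}$ and $\vartheta^{j}_{,j'r'}$ for its first and second derivatives, the defining identity $U_{j''}(\widetilde{\bm{\theta}})\equiv 0$ gives, at $\widehat{\bm{\theta}}$, the chain $\vartheta^{j}_{,j'}=\tau^{jj'}$, the crucial annihilation $\lambda_{j''k}\tau^{kr'}=0$, and $\vartheta^{j}_{,j'r'}=\sigma^{jv}\psi_{vab}\tau^{aj'}\tau^{br'}$, together with $\lambda_{j''k''}\sigma^{j''v}\sigma^{k''w}=\sigma^{vw}$. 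Repeated use of these collapses the chain-rule expressions: the second derivative becomes the profile information $\bar U_{j'r'}=-\lambda_{j'k}\tau^{kr'}=-\lambda_{1j'r'}$, so the quadratic term is $\bm{\epsilon}^{\top}{\bm{\Lambda}^{11}}^{-1}\bm{\epsilon}$; the third derivative loses every term carrying $\vartheta^{j}_{,j'r'}$ (each is contracted against $\lambda_{j''k}\tau^{kr'}=0$) and reduces to $\bar U_{j'r's'}=\psi_{jkl}\tau^{jj'}\tau^{kr'}\tau^{ls'}=6\Psi^{(3)}_{j'r's'}$, reproducing the cubic coefficient $-3\,\Psi^{(3)}_{j'r's'}/\sqrt{n}$.

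The delicate step is the fourth derivative. Differentiating once more, the surviving contributions are the fully contracted $\psi_{jrsu}\tau^{jj'}\tau^{rr'}\tau^{ss'}\tau^{uu'}$ together with a collection of symmetrised $\sigma$-weighted products of two cubic arrays, arising both from $\psi_{jkl}\vartheta^{(2)}$ and from $\lambda_{j''k''}\vartheta^{(2)}\vartheta^{(2)}$ (the latter simplified by $\lambda_{j''k''}\sigma^{j''v}\sigma^{k''w}=\sigma^{vw}$). These products are all of cross-contracted type $\sigma^{vw}\psi_{\cdot\cdot v}\psi_{\cdot\cdot w}$; the profile derivative carries no self-contracted term $\sigma^{vw}\psi_{uvw}$. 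Since the array $\Psi^{(4)}_{j'r's'u'}$ as defined above does carry such a self-contraction (the $2\psi_{jrs}\psi_{uvw}\sigma^{vw}$ piece), matching the two would force $\bar U_{j'r's'u'}=24\bigl(\Psi^{(4)}_{j'r's'u'}-\Psi^{(3)}_{j'r's'}\Psi^{(1)}_{u'}\bigr)$, because $24\,\Psi^{(3)}_{j'r's'}\Psi^{(1)}_{u'}$ is exactly that self-contracted piece; this produces the quartic coefficient in \eqref{grad_expansion}. I expect the main obstacle to be precisely this fourth-order bookkeeping: inverting the nuisance equations to the required order, keeping the symmetrisations (the $[3]$- and $[15]$-type permutation sums) straight, verifying that the cross-contracted coefficient is $3$ and the fully contracted one is $1$, and recognising that the $-\Psi^{(3)}\Psi^{(1)}$ term is not a genuine profile derivative but merely removes the self-contraction built into the definition of $\Psi^{(4)}$.
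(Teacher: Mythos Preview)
Your proposal is correct and, in fact, supplies far more detail than the paper itself, which proves the lemma in a single sentence by citing the procedure of \cite{ChangMukerjee2011}. Your route---recognising $S$ as $-n^{1/2}\epsilon_{j'}\bar U_{j'}(\bm{\theta}_1)$ for the profile score, Taylor expanding about $\widehat{\bm{\theta}}_1$, and evaluating the profile derivatives via the constrained map $\widetilde{\bm{\theta}}_2(\bm{\theta}_1)$---is precisely the kind of expansion that the cited reference carries out for likelihood-ratio-type statistics, so the two approaches coincide in spirit if not in explicit presentation.

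Your bookkeeping is right. The key algebraic identities $\tau^{k'j'}=\delta^{k'}_{j'}$, $\lambda_{j''k}\tau^{kr'}=0$, $\sigma^{jr}$ supported only on the nuisance block with $\sigma^{j''r''}=(\bm\Lambda_{22}^{-1})^{j''r''}$, and $\lambda_{j''k''}\sigma^{j''v}\sigma^{k''w}=\sigma^{vw}$ all check. For the fourth profile derivative contracted with $\epsilon^{\otimes 4}$, Fa\`a di Bruno gives a single $\psi_{jrsu}$ term, six $U_{jkl}\vartheta^{(2)}\vartheta^{(1)}\vartheta^{(1)}$ terms each contributing $+\sigma^{vw}\psi_{v\cdot\cdot}\psi_{w\cdot\cdot}$, and three $U_{jk}\vartheta^{(2)}\vartheta^{(2)}$ terms each contributing $-\sigma^{vw}\psi_{v\cdot\cdot}\psi_{w\cdot\cdot}$, the net cross-contracted coefficient being $6-3=3$ with no self-contracted term, exactly matching $24(\Psi^{(4)}_{j'r's'u'}-\Psi^{(3)}_{j'r's'}\Psi^{(1)}_{u'})$. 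Your observation that the $-\Psi^{(3)}\Psi^{(1)}$ piece merely cancels the self-contraction $2\sigma^{vw}\psi_{jrs}\psi_{uvw}$ built into $\Psi^{(4)}$, rather than arising from the profile derivative itself, is the correct reading.
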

\begin{proof}
Using a procedure analogous to that of \cite{ChangMukerjee2011}, the result holds.
\end{proof}

Let $\pi=\pi(\bm{\bm{\theta}})$ be a prior density for  $\bm{\bm{\theta}}$,
$\pi_{j}=D_j\pi(\bm{\bm{\theta}})$, $\pi_{jr}=D_{j}D_{r}\pi(\bm{\bm{\theta}})$,
$\widehat{\pi}=\pi(\widehat{\bm{\bm{\theta}}})$, $\widehat{\pi}_j=\pi_j(\widehat{\bm{\bm{\theta}}})$,
$\widehat{\pi}_{jr}=\pi_{jr}(\widehat{\bm{\bm{\theta}}})$,
\[
\Psi_{ j'r'}^{(2)}=\left\{\frac{\widehat{\pi}_{jr}}{2\widehat{\pi}}+
\frac{1}{4}\psi_{jrsu}\sigma^{su}+\frac{1}{24}\bigl(2\psi_{jrs}\psi_{uvw}
+3\psi_{jsu}\psi_{rvw}\bigr)\sigma^{(1)}_{suvw}\right\}\tau^{jj'}\tau^{rr'},
\]
\[
\Gamma_{j'}^{(1)}=\Psi_{j'}^{(1)}+\frac{\widehat{\pi}_{j}}{\widehat{\pi}}\tau^{j j'},
\qquad
\Gamma_{j'r'}^{(2)}=\Psi_{j'r'}^{(2)}+\frac{1}{2\widehat{\pi}}\bigl(\psi_{jrs}\widehat{\pi}_u
+\psi_{jsu}\widehat{\pi}_r\bigr)\sigma^{su}\tau^{jj'}\tau^{rr'},
\]
\[
\Gamma_{j'r's'u'}^{(4)}=\Psi_{j'r's'u'}^{(4)}
+ \frac{\widehat{\pi}_{u}}{6\widehat{\pi}}\psi_{jrs}\tau^{jj'}\tau^{rr'}\tau^{ss'}\tau^{uu'}.
\]
From \cite{GhoshMukerjee1991}, \cite{ChangMukerjee2010} derive an expansion up to order  $n^{-1}$
for the marginal posterior density of $\bm{\epsilon}$, which takes the form
\begin{align}\label{pipost}  
\begin{split}
\pi_{post}(\bm{\epsilon})&=\phi_{q}(\bm{\epsilon};\bm{\Lambda}^{11})
\biggl[1+\frac{1}{\sqrt{n}}\bigl(\Gamma_{j'}^{(1)}\epsilon_{j'}+\Gamma_{j'r's'}^{(3)}\epsilon_{j'}\epsilon_{r'}\epsilon_{s'})\\
&\quad+\frac{1}{n}\Bigl\{\Gamma_{j'r'}^{(2)}\bigl(\epsilon_{j'}\epsilon_{r'}-\lambda^{j'r'}\bigr)
+\Gamma_{j'r's'u'}^{(4)}\bigl(\epsilon_{j'}\epsilon_{r'}\epsilon_{s'}\epsilon_{u'}-\lambda^{(1)}_{j'r's'u'}\bigr) \\
&\quad+\frac{1}{2}\Psi_{j'r's'}^{(3)}\Psi_{u'v'w'}^{(3)}\bigl(\epsilon_{j'}\epsilon_{r'}\epsilon_{s'}\epsilon_{u'}\epsilon_{v'}\epsilon_{w'}
-\lambda^{(2)}_{j'r's'u'v'w'}\bigr)\Bigr\}\biggr]+o(n^{-1}),
\end{split}
\end{align}
where $\phi_{q}(\bm{z};\bm{\Sigma})$ denotes the density of the
$q$-variate normal distribution with mean $\bm{0}$ and covariance matrix $\bm{\Sigma}$.

We now follow the Bayesian route described in \cite{MukerjeeReid2000}; see Appendix 2.\\
\noindent\textbf{Step 1.} The approximate posterior characteristic function of $S$ is
\[
M_{\pi}(t)=\Es_{\pi}\{\exp(\xi S)\}=\int\exp(\xi S)\pi_{post}(\bm{\epsilon}) \dd\bm{\epsilon},
\]
where $\xi = {\rm i}t$ with ${\rm i}=(-1)^{1/2}$.
From Lemma \ref{lemma1} and after some algebra, we can write
\begin{align*}
\exp(\xi S)\pi_{post}(\bm{\epsilon})&=(1-2\xi)^{-q/2}\phi_{q}\left(\bm{\epsilon};\frac{\bm{\Lambda}^{11}}{1-2\xi}\right)
\Biggl[1+\frac{1}{\sqrt{n}}\Bigl\{(1-3\xi)\Psi_{j'r's'}^{(3)}\epsilon_{j'}\epsilon_{r'}\epsilon_{s'}+\Gamma_{j'}^{(1)}\epsilon_j'\Bigr\}\\
&\quad+\frac{1}{n}\biggl[\frac{1}{2}\Psi_{j'r's'}^{(3)}\Psi_{u'v'w'}^{(3)}
\biggl\{\frac{1}{9}(1-3\xi)^{2}\epsilon_{j'}\epsilon_{r'}\epsilon_{s'}\epsilon_{u'}\epsilon_{v'}\epsilon_{w'}-\lambda^{(2)}_{j'r's'u'v'w'}\biggr\}\\
&\quad-\Bigl[\xi\bigl\{4\Psi_{j'r's'u'}^{(4)}+\Psi_{j'r's'}^{(3)}\bigl(3\Gamma_{u'}^{(1)}-4\Psi_{u'}^{(1)}\bigr)\bigr\}
-\Gamma_{j'r's'u'}^{(4)}\Bigr]\epsilon_{j'}\epsilon_{r'}\epsilon_{s'}\epsilon_{u'}\\
&\quad+\Gamma_{j'r'}^{(2)}\bigl(\epsilon_{j'}\epsilon_{r'}
-\lambda^{j'r'}\bigr)-\Gamma_{j'r's'u'}^{(4)}\lambda^{(1)}_{j'r's'u'}\biggr]\Biggr]+o_p(n^{-1}).
\end{align*}
Now, by writing $\xi=-{\frac{1}{2}(1-2\xi)+\frac{1}{2}}$,
$\xi^2={\frac{1}{4}}(1-2\xi)^2-{\frac{1}{2}}(1-2\xi)+{\frac{1}{4}}$,
and assuming that $\bm{\bm{\theta}}$ is in the interior of the support of $\pi$,
we obtain after some algebra
\begin{equation}\label{fgm1}
M_{\pi}(t)=(1-2\xi)^{-q/2}\left\{1+\frac{1}{n}\sum_{i=0}^{3}H_{i}(1-2\xi)^{-i}\right\}+o_p(n^{-1}),
\end{equation}
where $H_{0}=-(H_{1}+H_{2}+H_{3})$,
\begin{align*}
H_{1}&=\frac{9}{8}\Psi_{j'r's'}^{(3)}\Psi_{u'v'w'}^{(3)}\lambda^{(2)}_{j'r's'u'v'w'}+\Gamma_{ j'r'}^{(2)}\lambda^{j'r'} \\
&\quad+\lambda^{(1)}_{j'r's'u'}\left\{2\bigl(\Psi_{j'r's'u'}^{(4)}-\Psi_{ j'r's'}^{(3)}\Psi_{u'}^{(1)}\bigr)
+\frac{3}{2}\Psi_{j'r's'}^{(3)}\Gamma_{u'}^{(1)}\right\},
\end{align*}
\begin{align*}
H_{2}&=-\frac{3}{4}\Psi_{j'r's'}^{(3)}\Psi_{u'v'w'}^{(3)}\lambda^{(2)}_{j'r's'u'v'w'} \\
&\quad+\lambda^{(1)}_{j'r's'u'}\left\{\Gamma_{j'r's'u'}^{(4)}-2\bigl(\Psi_{j'r's'u'}^{(4)}-\Psi_{j'r's'}^{(3)}\Psi_{u'}^{(1)}\bigr)
-\frac{3}{2}\Psi_{j'r's'}^{(3)}\Gamma_{u'}^{(1)}\right\},
\end{align*}
\[
H_{3}=\frac{1}{8}\Psi_{j'r's'}^{(3)}\Psi_{u'v'w'}^{(3)}\lambda^{(2)}_{j'r's'u'v'w'}.
\]

\noindent\textbf{Step 2.} Let $\bar{\pi}(\cdot)$ be an auxiliary prior density for $\bm{\theta}$
satisfying the conditions in \cite{BickelGhosh1990}.
We now obtain an approximate posterior characteristic function  of $S$ under the prior $\bar{\pi}(\cdot)$,
say $M_{\bar{\pi}}(t)$. From \eqref{fgm1}, we have
\begin{equation*}\label{4}
M_{\bar{\pi}}(t)=(1-2\xi)^{-q/2}\left\{1+\frac{1}{n}\sum_{i=0}^{3}\bar{H}_{i}(1-2\xi)^{-i}\right\}+o_p(n^{-1}),
\end{equation*}
where $\bar{H}_{i}$ denotes the counterpart of $H_{i}$ obtained by
replacing  $\pi(\cdot)$ with $\bar{\pi}(\cdot)$.
After some algebra, we have
\[
\Delta(\bm{\bm{\theta}})=\Es_{\bm{\theta}}(M_{\bar{\pi}})=(1-2\xi)^{-q/2}
\left\{1+\frac{1}{n}\sum_{i=0}^{3}\bar{J}_{i}(1-2\xi)^{-i}\right\}+o(n^{-1}),
\]
where $\bar{J}_{0}=-(\bar{J}_{1}+\bar{J}_{2}+\bar{J}_{3})$,
\begin{align*}
\bar{J}_{1}&=\frac{1}{32}\kappa_{jrs}\kappa_{uvw}\bigl(9m^{jr}m^{su}m^{vw}+6m^{ju}m^{rv}m^{sw}\bigr)
+\frac{1}{4}\bigl(\kappa_{jrsu}+3\kappa_{jrv}\kappa_{suw}a^{vw}\bigr)m^{jr}m^{su}\\
&\quad+\frac{3}{8}\kappa_{jrs}\kappa_{uvw}a^{vw}m^{jr}m^{su}+\frac{3}{4}\kappa_{jrs}m^{jr}m^{su}\frac{\bar{\pi}_{u}}{\bar{\pi}}\\
&\quad+\frac{1}{2}m^{jr}\biggl\{\frac{\bar{\pi}_{jr}}{\bar{\pi}}+\frac{1}{2}\kappa_{jrsu}a^{su}+\frac{1}{4}(2\kappa_{jrs}\kappa_{uvw}
+3\kappa_{jsu}\kappa_{rvw}a^{su}a^{vw})  \\
&\qquad+\frac{\bar{\pi}_{u}}{\bar{\pi}}\kappa_{jrs}a^{su}  +\frac{\bar{\pi}_{r}}{\bar{\pi}}\kappa_{jsu}a^{su}\biggr\},
\end{align*}
\begin{align*}
\bar{J}_{2}&=-\frac{1}{48}\kappa_{jrs}\kappa_{uvw}\bigl(9m^{jr}m^{su}m^{vw}+6m^{ju}m^{rv}m^{sw}\bigr)
-\frac{1}{4}\bigl(\kappa_{jrsu}+3\kappa_{jrv}\kappa_{suw}a^{vw}\bigr)m^{jr}m^{su}\\
&\quad-\frac{3}{8}\kappa_{jrs}\kappa_{uvw}a^{vw}m^{jr}m^{su}-\frac{3}{4}\kappa_{jrs}m^{jr}m^{su}\frac{\bar{\pi}_{u}}{\bar{\pi}}\\
&\quad+3m^{jr}m^{su}\left[\frac{1}{24}\{\kappa_{jrsu}+\bigl(2\kappa_{jrs}\kappa_{uvw}
+3\kappa_{jrv}\kappa_{suw}\bigr)a^{vw}\}+\frac{1}{6}\kappa_{jrs}\frac{\bar{\pi}_u}{\bar{\pi}}\right],
\end{align*}
\[
\bar{J}_{3}=\frac{1}{288}\kappa_{jrs}\kappa_{uvw}\bigl(9m^{jr}m^{su}m^{vw}+6m^{ju}m^{rv}m^{sw}\bigr).
\]

\noindent\textbf{Step 3.} We now compute
\[
\int\Delta(\bm{\theta})\bar{\pi}(\bm{\theta}) \dd\bm{\theta}
=(1-2\xi)^{-q/2}\left\{1+\frac{1}{n}\sum_{i=0}^{3}(1-2\xi)^{-i}
\int\bar{J}_{i}\bar{\pi}(\bm{\theta}) \dd\bm{\theta}\right\}+o(n^{-1}),
\]
by integrating the $\bar{J}$'s with respect to $\bar{\pi}$.
After integrating each term that depends on the prior distributions
and by allowing $\bar{\pi}(\cdot)$ to converge weakly to the degenerate
prior at the true value of $\bm{\theta}$, we arrive at
\[
\Es_{\bm{\theta}}\{\exp(\xi S)\}=(1-2\xi)^{-q/2}\left\{1+n^{-1}\sum_{i=0}^{3}\bar{A}_{i}(1-2\xi)^{-i}\right\}+o(n^{-1}),
\]
where the $\bar{A}$'s are functions of cumulants of log-likelihood derivatives.
By writing  $d=2\xi/(1-2\xi)$ and  using the fact that
$\sum_{i=0}^{3}\bar{A}_{i}=0$, we arrive at
\begin{equation}\label{cf_approx}
M(t)=(1-2\xi)^{-q/2}\left\{1+\frac{1}{24n}(A_{1}d+A_{2}d^2+A_{3}d^{3})\right\}+o(n^{-1}),
\end{equation}
with $A_1=24(\bar{A}_{1}+2\bar{A}_{2}+3\bar{A}_3)$, $A_2=24(\bar{A}_{2}+3\bar{A}_3)$, and
$A_3=24\bar{A}_3$. We can write
\begin{align*}
A_{1}&=12D_j D_{r}m^{jr}-6D_{u}(\kappa_{jrs}m^{jr}m^{su})
-12D_{u}\bigl(\kappa_{jrs}m^{jr}a^{su}\bigr)-12D_{r}\bigl(\kappa_{jsu}m^{jr}a^{su}\bigr)\\
&\quad+6\kappa_{jrsu}m^{jr}a^{su}+3\kappa_{jrs}\kappa_{uvw}\bigl(m^{jr}m^{su}a^{vw}+2m^{jr}a^{su}a^{vw}\bigr)
+9\kappa_{jsu}\kappa_{rvw}m^{jr}a^{su}a^{vw},
\end{align*}
\begin{align*}
A_{2}&=6D_{u}\bigl(\kappa_{jrs}m^{jr}m^{su}\bigr)-3\kappa_{jrs}\kappa_{uvw}\biggl(m^{jr}m^{su}a^{vw}
+\frac{3}{4}m^{jr}m^{su}m^{vw}+\frac{1}{2}m^{ju}m^{rv}m^{sw}\biggr)\\
&\quad-3\kappa_{jrsu}m^{jr}m^{su}-9\kappa_{jrv}\kappa_{suw}m^{jr}m^{su}a^{vw},
\end{align*}
\[
A_{3}=\frac{1}{12}\kappa_{jrs}\kappa_{uvw}\bigl(9m^{jr}m^{su}m^{vw}+6m^{ju}m^{rv}m^{sw}\bigr).
\]
Inverting $M(t)$ in \eqref{cf_approx} and
interchanging the indices in a suitable manner,  after some algebra,
we arrive at the expression for $A_1$, $A_2$, and $A_3$
as given in Theorem  $\ref{theorem1}$.

\section*{Appendix 2}\label{appendixB}
\subsection*{The Shrinkage Argument}

Let $\bm{x}=(x_1,\ldots,x_n)^{\top}$ be a random vector with density $f(\cdot,\bm{\theta})$,
where $\bm{\theta}\in\bm{\Theta}$ is a  $p$-dimensional parameter
and $\bm{\Theta}\subseteq\mathbb{R}^{p}$ is an open subset of the
Euclidean space. Let $Q(\cdot,\bm{\theta})$ be a measurable function. Assume that $Q$ is
continuous for all $\bm{\theta}$ and that its expectation exists. A Bayesian route
for obtaining $\Es_{\bm{\theta}}\{Q(\cdot,\bm{\theta})\}$ based on a shrinkage argument
involves the three steps described below.
\begin{description}
\item[Step 1.] Obtain  $\Es_{\pi}\{Q(\bm{\theta},\bm{X})|\bm{X}= \bm{x}\}$, the posterior
expectation of $Q$ under the prior $\pi(\cdot)$ for $\bm{\theta}$.
\item[Step 2.] Find $\Es_{\bm{\theta}}[\Es_{\pi}\{Q(\bm{\theta},\bm X)|\bm X= \bm x\}]=\Delta(\bm{\theta})$,
for $\bm{\theta}\in int_s(\pi)$, where $int_s(\pi)$ denotes the interior of the support of $\pi$.
\item[Step 3.] Integrate $\Delta(\bm{\theta})$ with respect to $\pi(\cdot)$
and allow $\pi(\cdot)$ to converge weakly to the degenerate prior
at $\bm{\theta}$, where $\bm{\theta}\in int_s(\pi)$. This yields
$\Es_{\bm{\theta}}\{Q(\bm{X},\bm{\theta})\}$. 
\end{description}
A detailed justification can be found in \cite{MukerjeeReid2000}.
} 

{\small

}
\end{document}